\documentclass[oneside,english]{amsart}
\usepackage[T1]{fontenc}
\usepackage[latin9]{inputenc}
\usepackage{verbatim}
\usepackage{float}
\usepackage{amsfonts}
\usepackage{amsthm}
\usepackage{amssymb}
\usepackage{graphicx}
\PassOptionsToPackage{normalem}{ulem}
\usepackage{ulem}
\usepackage{fullpage}

\usepackage{bbm}
\usepackage{bm}
\usepackage{dsfont}

\usepackage{longtable}

\usepackage{times}
\usepackage{color,cite,graphicx}
\fussy
\usepackage{chapterbib}

\def\[{\begin{equation}}
\def\]{\end{equation}}
\parskip-0.3ex

\newcounter{numer1}

\sloppy
 
\usepackage{multirow}
\usepackage{setspace}

\pagestyle{plain}

\usepackage{graphicx}
\usepackage{amssymb}
\usepackage[all]{xy}

\usepackage{epsfig}

\newtheorem{theorem}{Theorem}
\newtheorem{lemma}{Lemma}
\newtheorem{proposition}{Proposition}

\newtheorem{definition}{Definition}

\newtheorem{OBS}{Remark}
\newtheorem{exmp}{Example}
\newtheorem*{notation*}{Notation}
\newtheorem{notation}{Notation}

\usepackage{color}
 \usepackage[foot]{amsaddr}

\title{The Effects of Structural Perturbations on the Synchronizability of Diffusive Networks}
\author{Jan Philipp Pade$^{1}$, Camille Poignard$^{2}$ and Tiago Pereira$^{2}$}

\address{$^1$ Institut f\"ur Mathematik,  Humboldt Universit\"at zu Berlin, Berlin, Germany  \\
$^2$ Instituto de Ciencias Matem\'aticas e Computa\c{c}\~ao,   Universidade de S\~ao Paulo, S\~ao Carlos,  Brasil}
\email{$^1$ pade@math.hu-berlin.de}
%\vspace{10pt}

\begin{document}
\maketitle

\begin{abstract}
We investigate the effects of structural perturbations of both, undirected and directed diffusive networks on their ability to synchronize.  We establish a classification of directed links according to their impact on synchronizability.  We focus on adding directed links in weakly connected networks having a strongly connected component acting as driver. When  the connectivity of the driver is not stronger than the connectivity of the slave component, we can always make the network strongly connected while hindering synchronization. On the other hand, we prove the existence of a perturbation which makes the network strongly connected while increasing the synchronizability. Under additional conditions, there is a node in the driving component such that adding a single link starting at an arbitrary node of the driven component and ending at this node increases the synchronizability.

\end{abstract}
\section{Introduction}

Synchronization is an important  phenomenon in real world networks. For instance, in power-grids, power-stations must work in 50Hz-synchrony in order to avoid blackouts \cite{Doerfler2012,Motter2013}. In sensor networks, synchronization among the sensors is vital for the transmission of information \cite{Papadopoulos2005,Poonam2017}. On the other hand, synchronization of subcortical brain areas such as in the Thalamus is strongly believed to be the origin of motor diseases such as Dystonia and Parkinson \cite{Hammond2007,Milton2003,Starr2005}. In all of the mentioned examples, the stability of synchronous  states is crucial for the network's function or dysfunction respectively. Motivated by these observations, stability properties of synchronous states in systems of coupled elements have been investigated intensively \cite{Barahona2002,Pecora1998,Pikovsky2001,Field2017}. 

An important class, mimicking the above examples, is given by networks of identical elements which are coupled in a diffusive manner. That is, networks for which the dynamics of a node  depend  on the difference between its own state and its input. A special focus has been on unraveling the connection between such a network's coupling topology and its overall dynamics \cite{Matteo,Jalili2013,Motter2005,Nishikawa2006a,Motter2003,Nishikawa2017,agarwal2010,bick2017}.
 
While certain correlations have been observed, in general it remains unclear how to relate the structure to dynamical properties such as the stability of synchrony. A particularly interesting and important question in this category is the following: Assume that link in a network is perturbed or a new link with a small weight is added to a network. What is the impact on the dynamics? For instance, in interaction graphs of gene networks, it has been shown that adding links between two stable systems can lead to dynamics with positive topological entropy \cite{Poignard2013}. In diffusive systems such as laser networks, it was shown that the addition of a link can lead to synchronization loss \cite{pade2015,pade2015a}. In this article, we focus on the question whether these structural perturbations lead to higher or lower synchronizability. We give rigorous answers to this question, for both undirected and directed networks. Let us first introduce the model and motivate the main questions with some examples.

\subsection{Model and Examples} 

We call a network a triplet  $(\mathcal{G}, \bm{f}, \bm{H})$, where $\mathcal{G}$ is a graph, possibly weighted and directed, $\bm{f}:\mathbb{R}^{\ell}\rightarrow\mathbb{R}^{\ell}$ is a function representing the local dynamics of each node, and $\bm{H}$ is a coupling function between the nodes of the network. If no confusion can arise, we sometimes identify a network and its underlying graph $\mathcal{G}$. To this triplet we associate the following coupled equations.
\begin{equation}\label{eq:SystemAkinToDiff}
\dot{\bm{x}}_{i}=\bm{f}(\bm{x}_{i})+\alpha\sum_{j=1}^{N}W_{ij}\bm{H}(\bm{x}_{j}-\bm{x}_{i})\qquad i=1,2,\ldots,N.
\end{equation}
Here, $\alpha\geq0$ is the overall coupling strength %\sout{and $W_{ij}\geq0$ measures the strength of interaction from node $j$ to node $i$.}
and $\bm{W}=[W_{ij}]_{1\leq i,j\leq N}\in\mathbb{R}^{N\times N}$ is the adjacency matrix associated with the graph $\mathcal{G}$. In other words, $W_{ij}\geq0$ measures the strength of interaction from node $j$ to node $i$. 
The network $(\mathcal{G}, \bm{f}, \bm{H})$ is undirected if $\bm{W}$ is symmetric, otherwise it is directed.
The theory we develop here can include networks of non-identical elements with minor modifications \cite{Pereira2014}. 

Note that due to the diffusive nature of the coupling, if all oscillators start with the same initial condition,  then the coupling term vanishes identically. This ensures that the globally synchronized state is an invariant state for all coupling strengths $\alpha$, and we call the set
\begin{equation}\label{eq:sync-manifold}
M := \big\{ x_i \in U \subset \mathbb{R}^{\ell}   \mbox{ for } i \in \{ 1, \cdots , N \}  : x_1=\cdots = x_N \big\}
\end{equation}
the \emph{synchronization manifold}. The transverse stability of $M$ depends on the structure of the graph $\mathcal{G}$. Indeed, structural changes in $\mathcal{G}$ can have a drastic influence on the stability of $M$ as can be seen in the next examples which serve as motivating examples for the subsequent analysis.

\subsection{Structural Perturbations in Undirected Networks -- A Partition for Synchronization Improvement} Adding undirected links to an undirected network can never hinder synchronization, we state this result in Proposition \ref{lem:FiedlerGrowth}.  However, adding directed links to an undirected network can  either facilitate or hinder synchronization as the following example reveals. 
Consider the network with a graph $\mathcal{G}$ as shown in Figure \ref{Example2}. This example can be decomposed into two connected subgraphs (in blue and red) such that the addition of any directed connection between the two subnetworks facilitates synchronization - regardless of its direction.
\begin{figure}[H]
\includegraphics[scale=0.4]{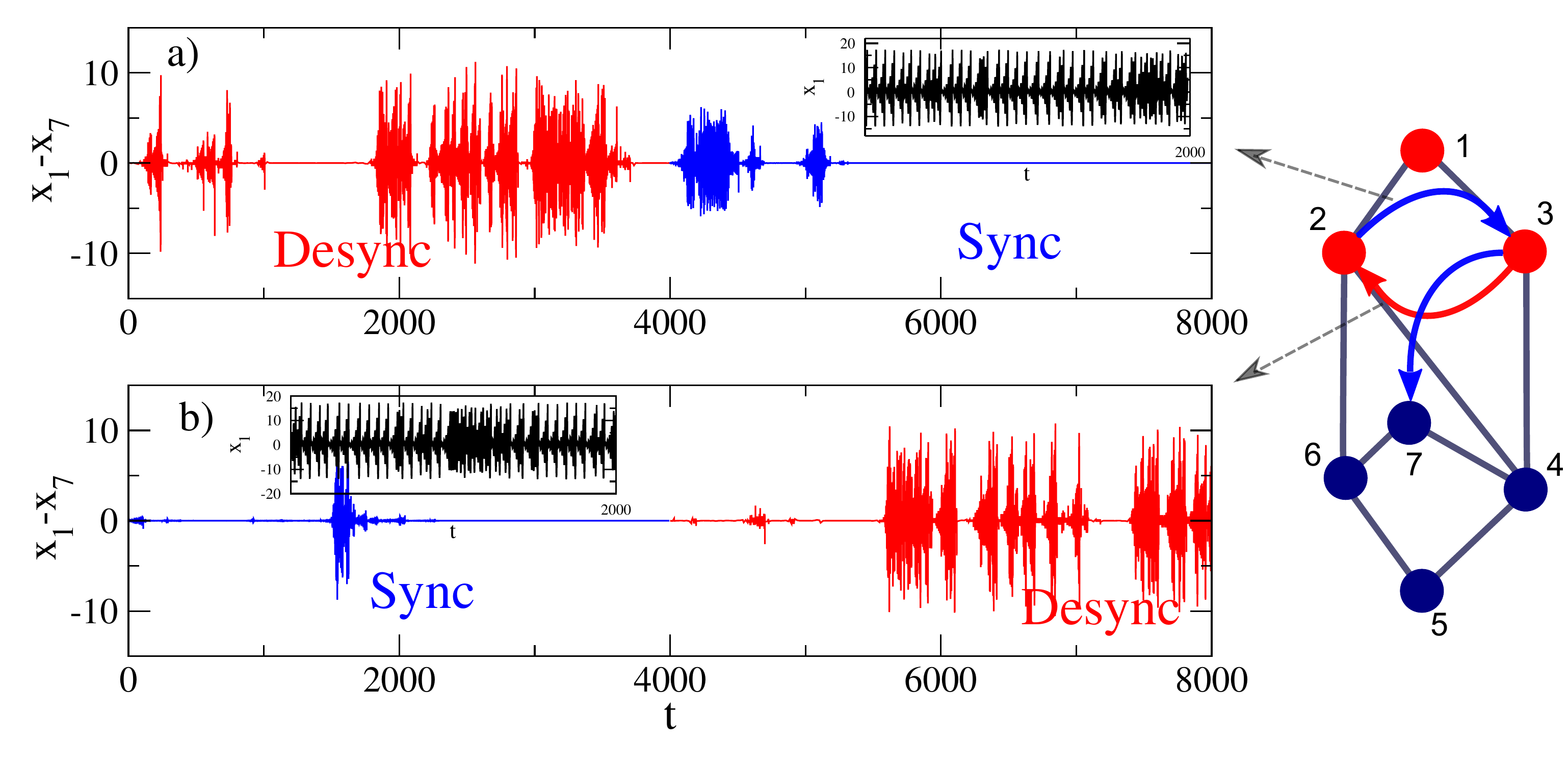}\caption[~~~~Dissection of graphs.]{Partition of a graph corresponding to the dynamic role of the nodes. Connecting red nodes and blue nodes with a directed link such as $3\rightarrow 7$ favours synchronization - regardless of the direction of the added link. Adding links between nodes of the same color has opposite effects on synchronizability. Simulations show dynamics of the network on the right with local dynamics $\bm{f}$ given by R\"ossler systems in the chaotic regime. In a), the addition of the blue link connecting nodes $2$ and $3$ at time $t=4000$ leads to chaotic synchronization of the whole network. In b), the addition of the reverse link in red destabilizes the synchronous state. The insets show time traces of the chaotic dynamics of a single node.}
\label{Example2}
\end{figure}

On the other hand, the impact of increasing weights or adding links with small weights between red nodes does depend on the direction of the perturbation. More precisely, the addition of a link from node $i$ to node $j$ has the opposite effect on the stability of synchronization as the addition of a link from node $j$ to node $i$. The same is valid for the blue nodes. For example, in Figure \ref{Example2} adding the directed blue link (with small weight) will enable  the network to synchronize its previously unsynchronized nodes (Plot a)), whereas adding the red link will hinder synchronization (Plot b)). In our first main result (Theorem \ref{Thm_Dissec_of_graphs}) we show that this situation is generic. What's more, we establish a full classification of directed links according to their impact on synchronizability.

\subsection{Structural Perturbations in Directed Networks -- About Masters and Slaves}
Directed networks always consist of one or several strongly connected subnetworks in which every node is reachable from any other node through a directed path. If there is more than one strongly connected subnetwork, two such subnetworks can be connected through unidirectional links pointing from one subnetwork to another. In the top right of Figure \ref{Fig_example1}, we show a network composed of two strongly connected subnetworks (without the red link), which is weakly connected: starting from the smallest connected subnetwork it is not possible to reach the larger connected subnetwork through a directed path. In the physics literature, this configuration is called {\it master-slave} coupling as the subnetwork consisting of nodes $1$, $2$ and $3$ drives the subnetwork consisting of nodes $4$ and $5$. %\textcolor{red}{We will restrict our attention to weakly connected networks presenting such a configuration (see Section XXX).}\\
This master-slave configuration is believed to have many optima such as synchronization. For instance, feedforward networks can synchronize for a wide range of coupling strengths while having only a few links \cite{Nishikawa2006a}. The network presented in Figure  \ref{Fig_example1} also supports stable synchronized dynamics.
An important question concerns the network dynamics once we make qualitative structural changes. For instance, what happens if the we add a link breaking the master-slave configuration?
\begin{figure}[H]
\includegraphics[scale=0.35]{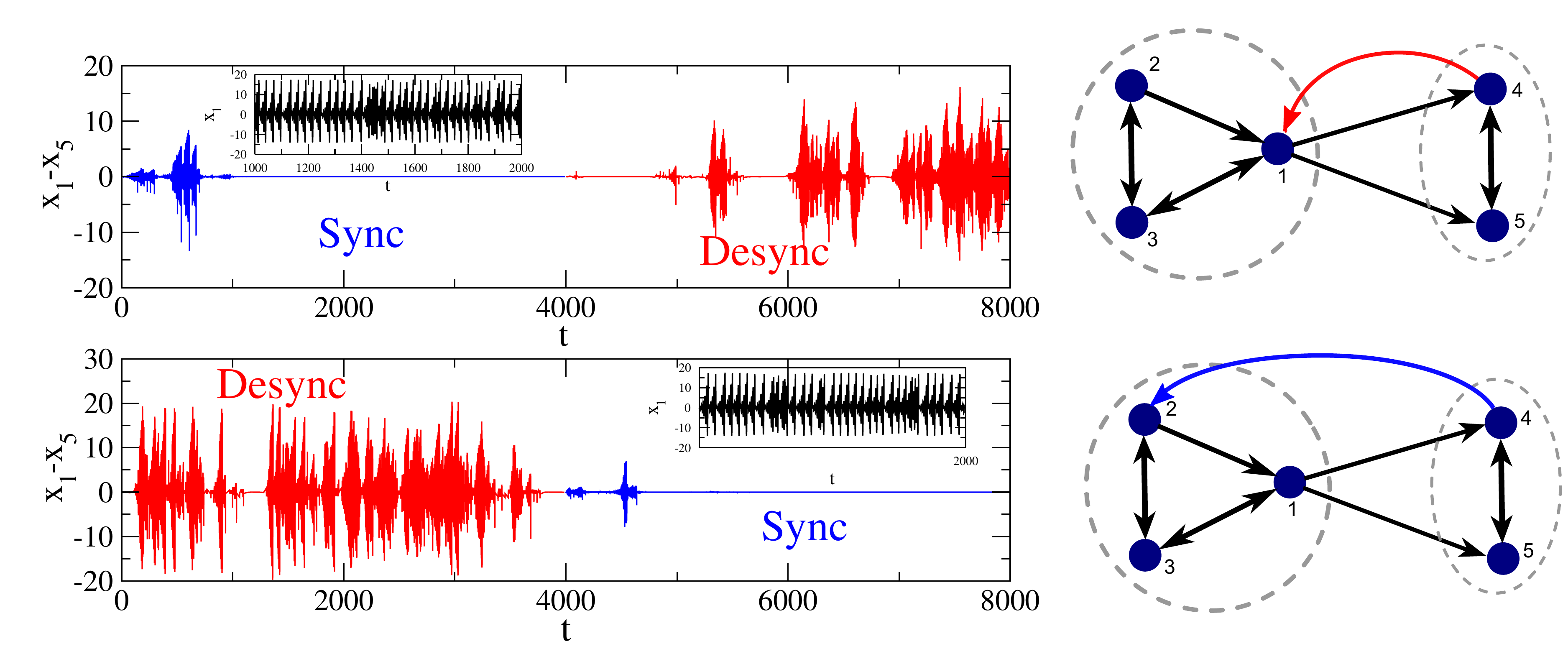}\caption[~~~~Impact of Making the network strongly connected.]{Simulations of the networks shown on the right. The local dynamics $\bm{f}$ are given by R\"ossler systems. In the top plot, the red link is added after time $t=4000$ and destroys the master-slave configuration by making the network strongly connected. As a consequence, the previously stable chaotic synchronization is no longer supported. In the bottom row, parameters are adjusted such that synchronization is unstable for the original network. The addition of the blue link at time $t=4000$ makes the network strongly connected again. However, in this case it leads to a stabilization of the synchronous state.}
\label{Fig_example1}
\end{figure}
An example for this is found in Figure \ref{Fig_example1}a). Introducing the new link (in red) makes the whole network strongly connected: there is a directed path connecting any two vertices in the network. Therefore, the addition of the link significantly improves the connectivity properties of the network. However, this structural improvement has a surprising consequence for the dynamics: the network synchronization is lost, as can be seen in the simulation in Figure \ref{Fig_example1}a).

{\it Hinderance of synchronization is not about breaking a master-slave configuration}. One may think that this synchronization loss appears because we are breaking the master-slave configuration. This rationale is justified as master-slave configurations are known to synchronize well \cite{Nishikawa2006a}. However, the synchronization loss is not related to the master-slave breaking. Indeed, adding a different connection which also makes the network strongly connected stabilizes the synchronous state (see Figure \ref{Fig_example1}b)).

{\it Hinderance of synchronization is not about reinforcing the hub.} Synchronization loss in the example of Figure  \ref{Fig_example1}a) appeared as an additional link was added to the hub of the largest subnetwork (the most connected node in the network). However, running experiments on random graphs with hubs, we found several  counter-examples in which linking to the hub improves synchronization.

To sum up, while in some settings, master-slave configurations and the presence of hubs play an important role for the behaviour of a network under structural perturbations \cite{Matteo,Pereira2010,Belykh2005}, for networks with diffusive dynamics given by Equation (\ref{eq:SystemAkinToDiff}) the whole picture is more complex. Adding extra links generates nonlinearities which can either enhance or hinder synchronization. Our second main result (Theorem \ref{thm:CutsetPerturbation}) gives an almost complete explanation of the complex behaviour of such weakly connected directed networks when a master-slave configuration is reinforced or destroyed respectively.

\subsection{Informal Statement of Results}\label{sec:informal}

%Under the following mild assumptions we can obtain informations on how structural changes affect the dynamics:
%\begin{itemize}
%\item[] -- Mild assumptions on the connectedness of the graph $\mathcal{G}$ %\textcolor{red}{[meaning?]}
%\item[] -- The Jacobian of the vector field $\bm{f}$ is uniformly bounded on $M$
%\item[] -- The Jacobian $D\bm{H}(\bm{0})$ of the coupling function is positive definite
%\end{itemize}
%Using the master stability approach to tackle the transverse stability of the synchronization manifold $M$ (see \cite{Pereira2014}), and structural genericity properties for Laplacian matrices we recently proved \cite{Poignard2017} we can in fact reduce the stability problem to the spectral analysis of graph Laplacians. Let us now give an informal statement of our main results.\\

Using the master stability approach to tackle the transverse stability of the synchronization manifold $M$  \cite{Pereira2014} %\textcolor{red}{and structural genericity properties for Laplacian matrices we recently proved \cite{Poignard2017}} 
we can in fact reduce the stability problem to the spectral analysis of graph Laplacians. The rather mild assumptions needed for this approach are specified in Section \ref{sec:assumptions}.  Let us now give an informal statement of our main results. \\

\noindent
{\it Informal Statement of Theorem 1 (Partitioning of Undirected Graphs)}. Consider a weighted connected undirected network with graph $\mathcal{G}$. We prove that typically, $\mathcal{G}$ can be uniquely decomposed  into two disjoint connected subgraphs $\mathcal{G}_1$ and $\mathcal{G}_2$ containing all the nodes of $\mathcal{G}$ such that increasing weights or adding a directed link with small weight between $\mathcal{G}_1$ and $\mathcal{G}_2$ facilitates synchronization, independent of the link's direction. Adding links among nodes of either one of the two subgraphs has  opposite effects on the stability: more precisely, if adding a link from node $k$ to node $j$ facilitates synchronization, adding a link from $j$ to $k$ is detrimental for synchronization. Finally, there exists a hierarchy of connected subnetworks building up with nodes from $\mathcal{G}_1$, $\mathcal{G}_2$ respectively, such that adding a link between two such subnetworks decreases the synchronizability.\\

In Theorem 2 we consider networks consisting of two strongly connected components. The general case of a higher number of strongly connected components is a straightforward generalization.\\
\noindent
{\it Informal Statement of Theorem 2 (Breaking Master-Slave configurations)}. Consider a directed network connected in a master-slave configuration as in Figure \ref{Fig_example1}.
%consisting of two strong components connected by a cutset as in Figure \ref{Fig_example1}. This network is connected in a master-slave configuration. 
First, consider the situation where {\it the master network is poorly connected}. Then, strengthening the cutset is immaterial for synchronization and it will neither facilitate nor hinder synchronization. Second, consider the case where the {\it master network is highly connected}. In this case we have:

\begin{itemize}
\item[] {\it -- Strengthening the driving facilitates synchronization}, leading to shorter transients towards synchronization and augmenting the basin of attraction.  

\item[] {\it -- Master-Slave configurations are non-optimal}. It is always possible to break the master-slave configuration in a way that favours synchronization (e.g. Figure \ref{Fig_example1}). Provided the overall connectivity of the network is poor it is even possible to find one or several nodes in the master-component such that the addition of an {\it arbitrary single link} ending at this node and breaking the master-slave configuration increases the synchronizability. In fact, if additionally, the Laplacian of the master component has zero column-sums, then any perturbation in opposite direction of the cutset enhances the synchronizability. 

\item[] {\it --  Breaking Master-Slave configurations can hinder Synchronization.}
If the connectivity of the master component is not much stronger than the connectivity of the slave component (a precise condition of this will be given in Theorem 2), we can always find a cutset such that there is a perturbation in opposite direction of this cutset for which synchronization is hindered.
%
%{\color{blue} 
Our result reveals the role the eigenvectors of the master network play in the destabilization of the synchronous motion. For example, if $\alpha_k$ is an eigenvalue of the Laplacian of the master and close to $\lambda_2$ the spectral gap in the slave component (this is the case in our illustration), then the eigenvector $\bm{X}_k$ associated to $\alpha_k$ encodes the important information about the possible destabilization. 
For instance, assume that the $i$th entry of $\bm{X}_k$ is the maximal (or minimal) one.  If the slave network is driven by a  links coming from the $i$th node then it is possible to destabilize the synchronization.
\end{itemize}

% FRIDAY      15/09/2017
%{\color{blue}These results allow for a classification of a network's possible links regarding their impact on synchronizability. \textcolor{red}{In the setting of directed networks}, we are restricted to the role of links connecting \textcolor{red}{the}
%strongly connected subnetworks. \textcolor{red}{The most general problem of determining the effect of a structural perturbation on a strongly connected component itself is very involved and still remains open.}}
%                                                             FRIDAY      15/09/2017
% In the most general case, the problem of determining the effect of a structural perturbation is very involved and has not been solved yet: For instance, in a strongly connected network it still cannot be predicted whether the addition of a certain directed link has a stabilizing or a destabilizing effect on synchronization.}

%These results allow for a classification of a network's possible links regarding their impact on synchronizability. However, in the most general case the problem of determining the effect of a structural modification on the graph spectrum is very involved and has not been solved yet. For instance, in a strongly connected network it still cannot be predicted whether the addition of a certain directed link has a stabilizing or a destabilizing effect on synchronization.

The remainder of the article is organized as follows. In Section \ref{sec:Stab_Sync} we introduce basic notions concerning the stability of synchronization in networks and present the notion of synchronizability of networks. In Section \ref{main} we present the two main results of our paper, followed by
%: the first one (Theorem \ref{Thm_Dissec_of_graphs}) establishes a classification of links in undirected networks such that directed perturbations of these links lead to an increase or a decrease of the spectral gap (and thus of the synchronizability). The second one (Theorem \ref{thm:CutsetPerturbation}) deals with the effect of adding links towards or from the cutsets in directed networks.
Section \ref{sec:Stab_Undir} in which we prove Theorem \ref{Thm_Dissec_of_graphs} and other results (Proposition \ref{lem:FiedlerGrowth} and Proposition \ref{30June2017}) completing the study on the dynamical impact of perturbations in undirected networks.
Section \ref{sec:proof_cutset} is devoted to the proof of Theorem \ref{thm:CutsetPerturbation}. The article concludes with a discussion in Section \ref{sec:discussion}.

{\bf Acknowledments:} We are in debt with Mike Field, Chris Bick, Anna Dias, Jeroen Lamb, Matteo Tanzi, and  Serhyi Yanchuk for useful discussions. TP and CP were partially supported by FAPESP Cemeai grant 2013/07375-0, Russian Science Foundation grant
14-41-00044 at the Lobayevsky University of Nizhny Novgorod and by the European Research Council [ERC AdG grant number 339523 RGDD]. JPP was supported by the DFG Research Center Matheon in Berlin.

\section{Notations and Definitions}\label{sec:Stab_Sync}
\subsection{Weighted Graphs and Laplacian Matrices}
We consider networks of identical elements with diffusive interaction. It will be useful to interpret the coupling structure of the network as a graph. We recall some basic facts on graph theory.

\begin{definition}[Weighted graphs]\label{def:graph}
%~ \\
\noindent
%directed  digraph  consists of
A weighted graph $\mathcal{G}$ is a set of nodes $\mathcal{N}$ together with a set of edges $\mathcal{E}\subset\mathcal{N}\times\mathcal{N}$ and a weight function $w:\mathcal{E}\rightarrow\mathbb{R}_+$. We say that the graph is unweighted when we have $w(i,j)=1$ for all $(i,j)$ in $\mathcal{E}$. Moreover
\begin{itemize}
\item[(i)] We say that the graph is undirected if  $(i,j)\in\mathcal{E}\iff (j,i)\in\mathcal{E}$ and $w(i,j)=w(j,i)$ for all  $(i,j)\in\mathcal{E}$. Otherwise, the graph is directed and edges are assigned orientations. A directed graph is also called digraph.
\item[(ii)] $\mathcal{G}=(\mathcal{N},\mathcal{E},w)$ is a subgraph of $\mathcal{G}^{\prime}=(\mathcal{N}^{\prime},\mathcal{E}^{\prime},w^{\prime})$ if $\mathcal{N}\subseteq \mathcal{N}^{\prime}$, and $\mathcal{E}\subseteq\mathcal{E}^{\prime}$. 
% and $w(i,j)\leq w^{\prime}(i,j)$. 
In this case, we write $\mathcal{G}\subseteq\mathcal{G}^{\prime}$.
\item[(iii)] The adjacency matrix $\bm{W}\in\mathbb{R}^{N\times N}$ of the graph $\mathcal{G}$ is defined through 
$$
W_{ij}=\left\{\begin{array}{cc}
w(i,j) & \text{if } (i,j)\in\mathcal{E}\\
0 & else
\end{array}\right.
$$
\end{itemize}
\end{definition}
To deal with synchronization of networks, we will focus on graphs exhibiting some sort of connectedness.
 
\begin{definition}[Connectedness of graphs]
\label{Def:graph-connection}
 An undirected graph $\mathcal{G}$ is connected if for any two nodes $i$ and $j$, there exists a path $\{i=i_1,\cdots ,i_p=j\}$ of nodes (successively connected by edges of $\mathcal{G}$) between node $i$ and node $j$. For directed graphs we have two notions of connectedness
 \begin{itemize} 
\item[(i)] A digraph $\mathcal{G}$ is strongly connected if every node is reachable from every other
node through a directed path.
\item[(ii)] The digraph is weakly connected if it is not strongly connected and the underlying graph which is obtained by ignoring the links' directions is connected. A maximal strongly connected subgraph of a weakly connected digraph is called strongly connected component, or strong component. The maximal set of links which connects two strong components is called cutset.
\item[(iii)] A spanning diverging tree of a digraph is a weakly connected subgraph such that one node, the root node, has no incoming edges and all other nodes have one incoming edge.
\end{itemize}
\end{definition}
Let a weighted digraph be given by its adjacency matrix $\bm{W}$ and let $\bm{D_W}$ be the diagonal matrix whose $i$-th entry is given by the degree $d_i=\sum_{j=1}^NW_{ij}$  of node $i$. The {\it Laplacian} of $\bm{W}$ is then defined as 
\[
\bm{L_W} = \bm{D_W}-\bm{W}
\]
As the Laplacian has zero row-sums, the vector $\mathbf{1}$ (all entries equal to $1$) is an eigenvector associated with the eigenvalue $0$.  
In virtue of the  Gershgorin theorem \cite{Horn1985}, the remaining eigenvalues $\lambda_{i}$ have nonnegative real parts. In what follows we will always assume that the eigenvalues are ordered in the following way
\[
0=\lambda_{1}\leq\Re\left(\lambda_{2}\right)\leq\ldots\leq\Re\left(\lambda_{N}\right).
\]
This allows us to introduce a standard notation from algebraic graph theory.
We call the second eigenvalue $\lambda_2=\lambda_2(\bm{L_W})$ of $\bm{L_W}$ the {\it spectral gap}. If the graph is undirected, we call the corresponding normalized eigenvector the {\it Fiedler vector} \cite{Chung1997}.

\subsection{Synchronizability of Networks:  Assumptions}\label{sec:assumptions}
Although equations of the form (\ref{eq:SystemAkinToDiff}) are heavily used in the context of network synchronization, it was only very recently that a stability result has been established for the general case of time-dependent solutions \cite{Pereira2014}. In order to guarantee for the stability of synchronous motion we make the following assumptions:  \\

\noindent
\textbf{A1 (Structural assumption)} $\mathcal{G}$ has a spanning diverging tree. \\
\noindent
\textbf{B1 (Absorbing Set) } The vector field $\bm{f}:\mathbb{R}^{\ell}\rightarrow\mathbb{R}^{\ell}$ is
continuous and there exists a bounded, positively invariant open set $U\subset\mathbb{R}^{\ell}$ such that $\bm{f}$ is continuously differentiable
in $U$ and there exists a $\varrho>0$ such that
\[
\left\Vert d\bm{f}\left(\bm{x}\right)\right\Vert \leq\varrho\qquad\forall \bm{x} \in U.
\]
\textbf{B2 (Smooth Coupling)} The local coupling function $\bm{H}$ is smooth satisfying
$\bm{H}(\bm{0})=\bm{0}$ and the eigenvalues $\beta_{j}$ of $d\bm{H}\left(\bm{0}\right)$
are real.\\~\\
\textbf{B3 (Spectral Interplay)} The eigenvalues $\beta_{j}$ of $d\bm{H}\left(\bm{0}\right)$ and $\lambda_{i}$ of $\bm{L}$ fulfil 
\begin{equation}\label{def:gamma}
\gamma:=\Re(\lambda_2) \min_{1\leq j\leq \ell}\beta_j>0.
\end{equation} 
%\[
%\gamma:=\min_{\begin{array}{c}
%2\leq i\leq N\\
%1\leq j\leq m
%\end{array}}\beta_{j}\Re\left(\lambda_{i}\right)>0
%\]

Let us shortly discuss these assumptions to see that they are somehow natural. \textbf{A1} concerns the coupling topology of the underlying (directed) graph. For undirected networks, it simply amounts to assuming that the underlying coupling graph is connected.
%It was shown in  \cite{agaev2000} that strongly connected digraphs always possess a spanning diverging tree.
In the case of a weakly connected digraph consisting of several strong components it is equivalent to the fact that there is at most one {\it root-component}: a strong component which does not have any incoming cutsets.  Algebraically, a consequence of this assumption for both, undirected and directed graphs, is that the zero eigenvalue of the graph Laplacian is simple \cite{agaev2000}. 

Assumption \textbf{B1} guarantees that the nodes' dynamics admit an invariant compact set, for instance an equilibrium, a periodic orbit or a chaotic orbit.

The second dynamical condition \textbf{B2} guarantees that the synchronous state ${x}_{1}\left(t\right)={x}_{2}\left(t\right)=\cdots={x}_{N}\left(t\right)$ is a solution of the coupled equations for all values of the overall coupling strength $\alpha$: when starting with identical initial conditions the coupling term vanishes and all the nodes behave in the same manner.

For the last condition \textbf{B3} remark that for undirected graphs the zero eigenvalue of the graph Laplacian is non-simple iff the underlying graph is disconnected \cite{Brouwer2011}. In this case the stability condition would be violated. Indeed, in order to observe synchronization it is clear that one should consider networks which are connected in some sense. We remark that the assumption that the $\beta_j$ are real is true for many applications. The general case of complex eigenvalues $\beta_j$ can be tackled in a similar way, but the analysis becomes more technical without providing new insight into the phenomena \cite{Pereira2014}.

\subsection{Critical Threshold for Synchronization}\label{sec:crit_threshold}

Under the previous assumptions it was shown in \cite{Pereira2014}  that for Equation (\ref{eq:SystemAkinToDiff}),  there exists an $\alpha_c = \alpha_c(\mathcal{G},\bm{f},\bm{H})$ such that if the global coupling strength fulfils $\alpha > \alpha_c$ the network is locally uniformly synchronized:  the synchronization manifold attracts uniformly in an open neighborhood. More precisely, there exists a $C=C\left(\bm{L},d\bm{H}\left(\bm{0}\right)\right)>0$
such that if the initial condition $\bm{x}_{i}\left(t_{0}\right)$ is in a neighborhood of the synchronization manifold, then the solution $\bm{x}(t)$ of Equation (\ref{eq:SystemAkinToDiff}) fulfils
$$
\left\Vert {x}_{i}\left(t\right)-{x}_{j}\left(t\right)\right\Vert \leq Ce^{-\left(\alpha\gamma-\rho\right)\left(t-t_{0}\right)}\left\Vert {x}_{i}\left(t_{0}\right)-{x}_{j}\left(t_{0}\right)\right\Vert \qquad\forall t\geq t_{0}.
$$
Now, the key connection to the graph Laplacian is that the critical coupling $\alpha_c$ can be factored as
\begin{equation}
\alpha_c=\frac{\rho}{\gamma}\label{eq:stabCond1}
\end{equation}
where $\rho=\rho(\bm{f},d\bm{H}(\bm{0}))$ is a constant depending only on $\bm{f}$ and $d\bm{H}(\bm{0})$. So the constant $\gamma$ which represents the coupling structure (see Equation (\ref{def:gamma})) is directly related to the contraction rate towards the synchronous manifold. In fact,  the condition $\alpha>\alpha_c$ for stable synchronous motion now writes as
\begin{equation}
\alpha\Re\left(\lambda_{2}\right)\min_{1\leq j\leq m}\beta_{j}>\rho.\label{eq:StabCond2}
\end{equation}
Condition (\ref{eq:StabCond2}) shows that the spectral gap $\lambda_{2}$ plays a central role for synchronization properties of the network. 

\subsubsection{Measures of Synchronization}\label{measure} We can use the critical coupling $\alpha_c$ in order to define a measure of synchronizability.
\begin{definition}
We say that the network $(\mathcal{G}_1, \bm{f}_1, \bm{H}_1)$ is more synchronizable than $(\mathcal{G}_2, \bm{f}_2, \bm{H}_2)$ 
if their critical couplings satisfy
\[
\alpha_c( \mathcal{G}_1, \bm{f}_1, \bm{H}_1) < \alpha_c( \mathcal{G}_2, \bm{f}_2, \bm{H}_2).
\] 
\end{definition}
Indeed, the range of coupling strengths which yield stable synchronization is larger for $( \mathcal{G}_1, \bm{f}_1, \bm{H}_1)$. Fixing the dynamics $\bm{f}$ and the coupling function $\bm{H}$ we can now measure whether structural changes in the graph will favour or hinder synchronization. Assume we have a network $(\mathcal{G}, \bm{f}, \bm{H})$ and a perturbed network $(\tilde{\mathcal{G}}, \bm{f}, \bm{H})$ with corresponding spectral gaps $\lambda_2$ and $\tilde{\lambda}_2$. \\
{\it A direct consequence of the definition of synchronizability is that if $\Re(\lambda_2)<\Re(\tilde{\lambda}_2)$, the perturbed network $(\tilde{\mathcal{G}}, \bm{f}, \bm{H})$ is more synchronizable than $(\mathcal{G}, \bm{f}, \bm{H})$}.\\
We also say that the modification {\it favours synchronization}. Otherwise, if  $\Re(\lambda_2)>\Re(\tilde{\lambda}_2)$ we say the structural perturbation {\it hinders synchronization}. This enables us to reduce the stability problem to an algebraic problem, i.e. the behaviour of the spectral gap under structural perturbations. We will use this approach throughout the whole article.

\section{Main Results}\label{main}

In this section we state our two main results: Theorem \ref{Thm_Dissec_of_graphs} deals with the undirected case, Theorem \ref{thm:CutsetPerturbation} with the directed case. 
%\textcolor{red}{
%Both of these results are based on a derivative formula for the spectral gap map (see Definition \ref{motion} and Notation \ref{motion2}).
%We emphasize that both theorems are structurally generic, a term which we introduced in our earlier paper \cite{Poignard2017}. 
%this means that given any network topology satisfying \textbf{A1}, our results are valid up to a small perturbation of the weights of the existing links of this network. This structural genericity is stronger than the classical one for which it is usually necessary to perturb drastically the structure itself of the original network, since it is necessary to perturb the zero entries of its underlying Laplacian matrix. 
%the small perturbations required usually change 
%In comparison to classical generic results, the structure of the network considered is preserved
%}
%In this section we state our two main results: the first one deals with the undirected case, the second one with the directed case. 
%\textcolor{red}{Both of these results are based on derivative formula for the spectral gap maps (see Definition \ref{motion} and 
%Notation \ref{motion2}) which are themselves requiring the simplicity of the spectral gap of the original network considered.
%}
%
We emphasize that, given assumption \textbf{A1}, both theorems are structurally generic, a term which we introduced in an earlier paper \cite{Poignard2017}.
In order to explain the notion of structural genericity, consider the set of Laplacians corresponding to networks with
identical coupling topologies but potentially different weights. In this set, the subset of Laplacians for which our results are valid
is dense and its complement is of zero Lebesgue measure. In other words, given any network topology satisfying \textbf{A1}, 
our results are valid up to a small perturbation of the weights of the \textit{existing} links of this network.
This structural genericity is stronger than the classical one for which it is usually necessary to perturb drastically the structure of the original network itself. 
For more details on structural genericity see Theorem 3.1 for the undirected case and Theorem 6.6 for the directed case in \cite{Poignard2017}.
%the small perturbations required usually change 
%In comparison to classical generic results, the structure of the network considered is preserved

                                      %%%%%%%%%%%%%%       %%%%%%%%%%%%%%%%%%

\subsection{Structural Perturbations in Undirected Networks}\label{main1}

In this subsection we focus on rank-one perturbations which correspond to the addition of a link in a directed way or in an undirected way. This extends naturally to increasing the weight of an existing link. %While this might seem to restrict the type of possible perturbations it is worth noticing that  the results we obtain are linear in the perturbation term (due to Lemma \ref{lem:EVGrowth}). 
Observe that structurally more complex perturbations such as the addition or deletion of two or several links can be treated in the same way since by Lemma \ref{lem:EVGrowth}, the results we obtain are linear in the perturbation term.
Let us first introduce the following matrix corresponding to the aforementioned perturbations.

%As mentioned in the introduction, among the different perturbations possible we focus on the case of adding a link in a directed way or in an undirected way. This extends naturally to increasing the weight on a link. Therefore, we introduce the following notation.
\begin{notation}
We denote by ${\bm{L}}_{{kl}}$ (respectively ${\bm{L}}_{{k \to l}}$) the Laplacian matrix of the disconnected undirected (respectively directed) graph with $n$ nodes having only one link between node $k$ and node $l$ (respectively pointing from $k$ to $l$):
\begin{equation}\label{PertMatrix}
\smaller{
{\bm{L}}_{kl}=\left(\begin{array}{ccccccc}
0 &  &  &  & & & \\
 &\ddots & & & & & \\
 & &1 &\ldots &-1& &  \\
 & & &\ddots & & & \\
 & & -1& \ldots & 1& & \\
 & & & & & \ddots&\\
 & & & & & &0
\end{array}\right)},
%\end{equation},
%\begin{equation}\label{PertMatrix}
\smaller{
{\bm{L}}_{{k \to l}}=\left(\begin{array}{ccccccc}
0 &  &  &  & & & \\
 &\ddots & & & & & \\
 & &1 &\ldots &-1& &  \\
 & & &\ddots & & & \\
 & & & & \ddots& & \\
 & & & & & \ddots&\\
 & & & & & &0
\end{array}\right)}
\end{equation}
where the entry $-1$ in the matrix $\bm{L}_{k \to l}$ is in the $l$-th row and $k$-th column.
\end{notation}

Now, as exposed in Subsection \ref{measure}, for linear stability considerations it is sufficient to track the spectral gap of the perturbed graph Laplacian.

\begin{definition}[Motion of Eigenvalues]\label{motion}
For an undirected graph Laplacian $\bm{L_W}$ having a simple spectral gap $\lambda_2 \left(\bm{L_W}\right)\neq 0$, we set 
$$
s_{kl}:=  \lim_{\varepsilon \rightarrow 0 } \frac{1}{\varepsilon} \left( \lambda_2\left( \bm{L_W}+\varepsilon {\bm{L}}_{kl}  \right) - \lambda_2\left( \bm{L_W}  \right) \right)
$$
$$
s_{k \to l}:=  \lim_{\varepsilon \rightarrow 0 } \frac{1}{\varepsilon} \left( \lambda_2\left( \bm{L_W}+\varepsilon {\bm{L}}_{k\rightarrow l}  \right) - \lambda_2\left( \bm{L_W}  \right) \right)
$$
for the change rates of the spectral gap maps under the small structural perturbations $\varepsilon {\bm{L}}_{kl}$, $\varepsilon {\bm{L}}_{k\rightarrow l}$. %In case $\bm{L_W}$ is the Laplacian of a directed graph we set 

\end{definition}
The maps $\varepsilon \mapsto \lambda_2\left( \bm{L_W}+\varepsilon {\bm{L}}_{kl}\right)$ and $\varepsilon \mapsto \lambda_2\left( \bm{L_W}+\varepsilon {\bm{L}}_{k\rightarrow l}\right)$
are smooth functions because of the simplicity of $\lambda_2 (\bm{L_W})$ at $\varepsilon=0$ (see Lemma \ref{lem:EVGrowth} below, in which we provide an expression of the derivative of the spectral gap map at $\varepsilon=0$).
This simplicity assumption is structurally generic, as shown in \cite{Poignard2017}.
%(even more, we proved in this paper that having simple eigenvalues is a structural genericity property for connected graph Laplacians).}

%Our two main results are based on this derivative formula.
As we show in Section \ref{sec:Stab_Undir}, adding undirected links to undirected networks will never hinder synchronization. The same is valid for increasing weights in undirected networks. This is not valid any more for directed perturbations of this type as our first main result shows.

\begin{theorem}\label{Thm_Dissec_of_graphs}
Let $\bm{L_W}$ be the Laplacian of an undirected, connected and weighted graph $\mathcal{G}$. Then, for a generic choice of the nonzero weights of $\mathcal{G}$ we have: 
\begin{description}
\item[\rm{(i) Improving synchronizability by a unique decomposition}] There is a unique partition of $\mathcal{G}$ into two
%There are unique 
disjoint connected subgraphs $\mathcal{G}_1$ and  $\mathcal{G}_2$ %containing all the nodes of  $\mathcal{G}$ 
such that for nodes $k$ and $l$ belonging to different subgraphs we have $s_{k\rightarrow l} > 0$.
%\item[(i) \rm{Unique cut}] There are unique disjoint connected subgraphs $\mathcal{G}_1$ and  $\mathcal{G}_2$ containing all the nodes of  $\mathcal{G}$ such that for nodes $k$ and $l$ belonging to different subgraphs we have $s_{k\rightarrow l} > 0$.
%\item[(ii) \rm{Complementary Motion}] For nodes $k,l$ belonging to the same subgraph, so either $k,l\in\mathcal{G}_1$ or $k,l\in\mathcal{G}_2$ we have $s_{k\rightarrow l}s_{l\rightarrow k}\leq 0$.
\item[\rm{(ii) Opposite effects on synchronizability}] For nodes $k,l$ belonging to the same subgraph, so either $k,l\in\mathcal{G}_1$ or $k,l\in\mathcal{G}_2$ we have $s_{k\rightarrow l}s_{l\rightarrow k}\leq 0$.
\item[\rm{(iii)  Cascade of destabilization}] There are unique increasing sequences of connected subgraphs
%\item[(iii) \rm{Cascade of desynchronization}] There are unique increasing sequences of connected subgraphs
\begin{eqnarray*}
\mathcal{G}_1&\subset & \mathcal{G}_{11}\subset\mathcal{G}_{12}\subset\ldots\subset\mathcal{G}_{1r}=\mathcal{G}\\
\mathcal{G}_2&\subset & \mathcal{G}_{21}\subset\mathcal{G}_{22}\subset\ldots\subset\mathcal{G}_{2m}=\mathcal{G}
\end{eqnarray*}
with $r+m\leq N$ such that for $j>i$ and for a node $k$ in $\mathcal{G}_{1i}\cap\mathcal{G}_2$ and a node $l$ in $\mathcal{G}_{1j}\setminus\mathcal{G}_{1i}$ we have $s_{l\rightarrow k} < 0$.
% Should we mention that the intersection and setminus refers to the node sets?
\end{description}
\end{theorem}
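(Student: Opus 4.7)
The plan is to reduce all three claims to a single first-order perturbation identity for the spectral gap and then exploit the sign structure and monotonicity properties of the Fiedler vector of $\bm{L_W}$.

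First I would use structural genericity to ensure that $\lambda_2(\bm{L_W})$ is simple, so that the normalized Fiedler vector $\bm{v}$ is well-defined up to a global sign. Since $\bm{L_W}$ is symmetric, its left and right eigenvectors at $\lambda_2$ both equal $\bm{v}$, and Lemma~\ref{lem:EVGrowth} gives $s_{k\to l}=\bm{v}^{T}\bm{L}_{k\to l}\bm{v}$. Writing $\bm{L}_{k\to l}=\bm{e}_l(\bm{e}_l-\bm{e}_k)^T$ then yields the compact formula
\[
s_{k\to l}=v_l(v_l-v_k),
\]
which is the workhorse of the whole proof.

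For (i), the requirement that both $s_{k\to l}>0$ and $s_{l\to k}>0$ is equivalent to $v_k v_l<0$, so the only candidate partition is $\mathcal{G}_1:=\{i : v_i>0\}$, $\mathcal{G}_2:=\{i : v_i<0\}$. Structural genericity (as in \cite{Poignard2017}) forbids $v_i=0$, so $\mathcal{G}_1\cup\mathcal{G}_2$ is the entire node set; connectedness of each induced subgraph is the classical sign-graph theorem of Fiedler, and uniqueness of $\bm{v}$ up to a global sign gives uniqueness of the partition. For (ii), the direct identity
\[
s_{k\to l}\,s_{l\to k}=-v_k v_l(v_l-v_k)^2
\]
is non-positive whenever $v_k$ and $v_l$ share a sign, with equality generically excluded when $k\neq l$.

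For (iii) I would invoke Fiedler's monotone-connectivity theorem: when $\lambda_2$ is simple, every threshold set $\{i : v_i\geq t\}$ with $t\leq 0$ induces a connected subgraph of $\mathcal{G}$. Genericity makes the entries of $\bm{v}$ on $\mathcal{G}_2$ pairwise distinct, so I enumerate them as $v_{q_1}>v_{q_2}>\cdots>v_{q_r}$ (all negative), define $\mathcal{G}_{1i}:=\mathcal{G}_1\cup\{q_1,\dots,q_i\}$, and observe that each $\mathcal{G}_{1i}$ is a threshold set of the form $\{j : v_j\geq -r_i\}$ and therefore connected. For $k=q_a\in\mathcal{G}_{1i}\cap\mathcal{G}_2$ (so $a\leq i$) and $l=q_b\in\mathcal{G}_{1j}\setminus\mathcal{G}_{1i}$ (so $b>i$), one has $v_k>v_l$ with both negative, giving $s_{l\to k}=v_k(v_k-v_l)<0$, as required. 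The $\mathcal{G}_2$-cascade is obtained by the symmetric construction applied to $-\bm{v}$, and uniqueness of the orderings yields uniqueness of both chains. The main obstacle is precisely the threshold-connectivity input from Fiedler, which alone drives the existence of the unique cascades; everything else reduces to algebraic manipulation of $s_{k\to l}=v_l(v_l-v_k)$, together with the bookkeeping task of checking that structural genericity simultaneously produces simplicity of $\lambda_2$, non-vanishing of every coordinate of $\bm{v}$, and pairwise distinctness of its entries — all needed for the three uniqueness claims and all standard consequences of the framework of \cite{Poignard2017}.
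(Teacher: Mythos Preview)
Your proposal is correct and follows essentially the same route as the paper: both arguments rest on the identity $s_{k\to l}=v_l(v_l-v_k)$ obtained from Lemma~\ref{lem:EVGrowth}, use the sign partition of the Fiedler vector together with Fiedler's connectivity theorem (Theorem~\ref{thm:FiedlerDecomposition}) for part~(i), and build the cascades in~(iii) from the threshold sets $M(r)$. The only cosmetic difference is that the paper allows ties among the negative entries of $\bm{v}$ and introduces an $\varepsilon$-shift to define the thresholds $d_i$, whereas you invoke genericity once more to get pairwise distinct entries and add nodes one at a time; this is why the paper states $r+m\leq N$ rather than $r+m=N$.
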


This result builds on a theorem by M. Fiedler that we recall in Section 3. 
%{\color{blue}As we will use throughout the article that the spectral gap is simple we make the following important remark concerning the genericity of our results.

%\begin{OBS}\label{rem:FiedlerGenericity}
%(Structural genericity of Laplacian spectra)
%As we have shown recently \cite{Poignard2017}, the assumption of simplicity of the spectral gap is structurally generic in the following sense. Consider the set of Laplacians corresponding to networks with identical coupling topologies but potentially different weights. In this set, the subset of such Laplacians with simple eigenvalues (notably simple spectral gap) is dense and its complement is of zero Lebesgue measure (see Theorem 3.1 for the undirected case and Theorem 6.6 for the directed case in \cite{Poignard2017}). Furthermore, the property that the Fiedler vector has only nonzero entries is structurally generic for graph Laplacians of connected weighted graphs (Theorem 3.2 in \cite{Poignard2017}).
%\end{OBS}
%}

\subsection{Structural Perturbations in Directed Networks}\label{section3.2}
In this section we investigate the class of directed networks satisfying assumption \textbf{A1} and 
consisting of at least two strong components. Due to \textbf{A1}, these networks have one root-component.
Furthermore, we restrict ourselves to the study of the dynamical role of links between strong components, 
i.e on cutsets. Here, perturbations can point either in direction of a cutset or in opposite direction of a cutset. 
For simplicity of presentation we can assume that there are only two strong components. 
So the corresponding Laplacian is of the form
%as in the example of Figure \ref{Fig_Digraph}
\begin{equation}\label{eq:LaplacianCutset}
\bm{L_W}=\left(\begin{array}{cc}
\bm{L}_{1} & \bm{0}\\
-\bm{C}& \bm{L}_{2}+\bm{D}_{\bm{C}}
\end{array}\right),
\end{equation}
where $\bm{L}_{1}\in\mathbb{R}^{n\times n}$ and $\bm{L}_{2}\in\mathbb{R}^{m\times m}$ are the respective Laplacians of the strong components, $\bm{C}\in\mathbb{R}^{m\times n}$ is the adjacency matrix of the cutset pointing from one strong component to the other and $\bm{D_{C}}$ is a diagonal matrix with the row sums of $\bm{C}$ on its diagonal.
The results presented in Theorem \ref{thm:CutsetPerturbation} below can be generalized in a straightforward way to networks with more than two strong components. %provided Assumption \textbf{A1} is still satisfied, i.e. provided there is only one root component. 
For instance, our results are still valid for graph Laplacians of the form
\begin{align*}
\bm{L_W}=\begin{pmatrix}
\bm{L}_{1} & \bm{0}& \bm{0} & \cdots & \bm{0}\\
-\bm{C_{21}} & \bm{L}_{2}+\bm{D}_{\bm{C_{21}}}& \bm{0} & \cdots &\bm{0}\\
-\bm{C_{31}} & -\bm{C_{32}} & \bm{L}_{3}+\bm{D}_{\bm{C_{31}}}+\bm{D}_{\bm{C_{32}}}&  0 \cdots &\bm{0}\\
%\\
\vdots &  & \ddots &\ddots& \vdots \\
%\\
\\
-\bm{C_{(p-1)1}}&\ldots &-\bm{C_{(p-1)(p-2)}}& \bm{L}_{p-1}+\sum_{i=0}^{p-2}\bm{D}_{\bm{C_{(p-1)i}}}& \bm{0}\\
\\
-\bm{C_{p1}} & \ldots && -\bm{C_{p(p-1)}}& \bm{L}_{p}+\sum_{i=0}^{p-1}\bm{D}_{\bm{C_{pi}}}\\
\end{pmatrix}.\\
\end{align*}
representing a graph with $p$ strong components connected by cutsets $\bm{C_{ij}}$.\\
We first remark that as a consequence of the block structure of $\bm{L_W}$ given by Equation \eqref{eq:LaplacianCutset}, its eigenvalues are either eigenvalues of $\bm{L}_{1}$ or eigenvalues of $\bm{L}_{2}+\bm{D}_{c}$. A structural perturbation in direction of the cutset induced by a nonnegative matrix $\bm{\Delta}\in\mathbb{R}^{m\times n}$ corresponds to the following modified Laplacian matrix
$$\bm{L}_p\left(\bm{\Delta}\right)=\left(\begin{array}{cc}
\bm{L}_{1} & \bm{0}\\
-\bm{C}-\bm{\Delta} & \bm{L}_{2}+\bm{D}_{\bm{C}+\bm{\Delta}}
\end{array}\right).
$$
A structural perturbation in opposite direction of the cutset induced by a nonnegative matrix $\bm{\Delta}\in\mathbb{R}^{n\times m}$ corresponds to the following modified Laplacian matrix
$$
\bm{L}_{p}\left(\bm{\Delta}\right)=\left(\begin{array}{cc}
\bm{L}_{1}+\bm{D}_{\Delta} & -\bm{\Delta}\\
-\bm{C} & \bm{L}_{2}+\bm{D_{C}}
\end{array}\right).
$$ 

\begin{OBS}
In the rest of the paper, to avoid cumbersome formulations, we will employ the formulation ``a structural perturbation $\bm{\Delta}$ in direction of the cutset" to refer to a structural perturbation in direction of the cutset induced by a nonnegative matrix $\bm{\Delta} \in\mathbb{R}^{m\times n}$". Similarly for structural perturbations in opposite direction of the cutset.
\end{OBS}

%{\color{red}think of the simplest notation neede here. For instance $L_c$ and $L_{oc}$ appear only here. Do we need them?}
\begin{notation*}\label{motion2}
Given a Laplacian matrix $\bm{L_W}$ of a directed graph with simple spectral gap $\lambda_2\left(\bm{L_W}\right)$ and a nonnegative matrix $\bm{\Delta}$ we denote, similarly to the above notations, by 
$$
s\left(\bm{\Delta}\right):=  \lim_{\varepsilon \rightarrow 0 } \frac{1}{\varepsilon} \left( \lambda_2\left( \bm{L}_p(\varepsilon\bm{\Delta} )   \right) - \lambda_2\left( \bm{L_W}  \right) \right)
$$
the change rate of the spectral gap map under the small structural perturbations $\varepsilon \bm{\Delta}$ in direction or in opposite direction of the cutset.
\end{notation*}
Observe, as in Definition \ref{motion}, that the spectral gap map $\varepsilon \mapsto \lambda_2\left( \bm{L}_p(\varepsilon\bm{\Delta})\right)$ is regular because of the simplicity of $\lambda_2\left(\bm{L_W}\right)$ \cite{Horn1985}. In \cite{Poignard2017} we proved that having simple eigenvalues is a structurally generic property for graph Laplacians of weakly connected digraphs that satisfy Assumption \textbf{A1}.

%\textcolor{red}{
Notice that we can possibly have $s\left(\bm{\Delta}\right) \in \mathbb{C}$ since the matrices involved in this notation are no more symmetric. However, we will prove in Section \ref{sec:proof_cutset} (Lemma \ref{lem:PositivOfEvectors}) that in the case where $\lambda_2\left(\bm{L_W}\right)$ is an eigenvalue of $\bm{L_{2}}+\bm{D_{C}}$, then $\lambda_2\left(\bm{L_W}\right)$ is real positive and therefore $s\left(\bm{\Delta}\right) \in \mathbb{R}$.
We can now state our second main result in the directed case:
                      %%%%%%%%%%%%%%%%%%     Theorem 2       %%%%%%%%%%%%%%
\begin{theorem}\label{thm:CutsetPerturbation}
Let a directed graph $\mathcal{G}$ consist of two strong components connected by a cutset with adjacency matrix $\bm{C}$ and write the associated Laplacian as
\begin{equation*}
\bm{L_W}=\left(\begin{array}{cc}
\bm{L}_{1} & \bm{0}\\
-\bm{C} & \bm{L}_{2}+\bm{D_{C}}
\end{array}\right).
\end{equation*}
Assume \textbf{A1} is satisfied. Then, for a generic choice of the nonzero weights of $\bm{L_W}$ we have the following assertions:
\begin{description}

\item[\rm{(i)  {Invariance of synchronizability}}] If the spectral gap $\lambda_2$ of $\bm{L_W}$ is an eigenvalue of $\bm{L_1}$, then the network's synchronizability is invariant under arbitrary structural perturbations $\bm{\Delta}$
in direction of the cutset.

\item[\rm{(ii) {Improving synchronizability by reinforcing the cutset}}] If $\lambda_{2}$ is an eigenvalue of $\bm{L_{2}}+\bm{D_{C}}$, then the network's synchronizability increases for arbitrary structural perturbations $\bm{\Delta}$
in direction of the cutset.

\item[\rm{(iii) {Non-optimality of master-slave configurations}}] Assume $\lambda_{2}$ is an eigenvalue of $\bm{L_{2}}+\bm{D_{C}}$. Then we have the following statements:

\subitem {\rm (a)} There exists a structural perturbation $\bm{\Delta}$ in opposite direction of the cutset such that $s(\bm{\Delta})> 0$. 
\subitem {\rm (b)} There exists a constant $\delta (\bm{L}_1)>0$ and at least one node $1\leq k_0\leq n$ (in the driving component) such that if we have $0<\lambda_2<\delta (\bm{L}_1)$, then $s(\bm{\Delta})> 0$ for any structural perturbation $\bm{\Delta}$ consisting of only one link in opposite direction of the cutset and ending at node $k_0$.
\subitem {\rm (c)} If moreover $\bm{L}_1$ has zero column-sums, then there exists a constant $\delta (\bm{L}_1)>0$ such that if $0<\lambda_2<\delta (\bm{L}_1)$ we have $s(\bm{\Delta})> 0$ for any structural perturbation $\bm{\Delta}$ in opposite direction of the cutset.

\item[\rm{(iv) {Hindering synchronizability by breaking the master-slave configuration}}]
%\textcolor{red}{There exists a cutset $\bm{C}$ and a perturbation $\bm{\Delta}$ in opposite direction of $\bm{C}$ for which $\lambda_{2}$ is an eigenvalue of $\bm{L_{2}}+\bm{D_{C}}$ and such that: if $\bm{L}_1$ admits a positive eigenvalue $\alpha_k$ with $0<\alpha_k<2\lambda_2$, then we have $s(\bm{\Delta})\leq 0$.\\}
%\textcolor{red}{
There exists a cutset $\bm{C}$ for which $\lambda_{2}$ is an eigenvalue of $\bm{L_{2}}+\bm{D_{C}}$ and a perturbation $\bm{\Delta}$ in opposite direction of $\bm{C}$ such that: if $\bm{L}_1$ admits a positive eigenvalue 
%
%{\color{blue} 
sufficiently small 
%}
%
%with $0<\alpha_k<2\lambda_2$, 
%
then we have $s(\bm{\Delta})\leq 0$.\\
%}
%Assume $\lambda_{2}$ is an eigenvalue of $\bm{L_{2}}+\bm{D_{C}}$. If $\bm{L}_1$ admits a positive eigenvalue $\alpha_k$ such that $0<\alpha_k<2\lambda_2$,
%then there exists  a choice of the cutset $\bm{C}$ and a structural perturbation $\bm{\Delta}$ in opposite direction of the cutset such that $s(\bm{\Delta})\leq 0$.\\
\end{description}
\end{theorem}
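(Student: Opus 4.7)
The plan is to exploit the block lower-triangular structure of $\bm{L_W}$. By structural genericity, the spectrum of $\bm{L_W}$ is the disjoint union $\sigma(\bm{L}_1)\sqcup\sigma(\bm{L}_2+\bm{D}_{\bm{C}})$. In the regime $\lambda_2\in\sigma(\bm{L}_2+\bm{D}_{\bm{C}})$, the right and left eigenvectors of $\bm{L_W}$ associated with $\lambda_2$ take the form $\bm{x}=(\bm{0},\bm{x}_2)^{T}$ and $\bm{y}=(\bm{y}_1,\bm{y}_2)^{T}$, where $\bm{x}_2,\bm{y}_2$ are the Perron eigenvectors of $\bm{L}_2+\bm{D}_{\bm{C}}$ associated with $\lambda_2$ (strictly positive, by Lemma~\ref{lem:PositivOfEvectors}) and $\bm{y}_1^{T}=\bm{y}_2^{T}\bm{C}(\bm{L}_1-\lambda_2\bm{I})^{-1}$. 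Combined with the first-order perturbation formula $s(\bm{\Delta})=\bm{y}^{T}\bm{L}_p'(0)\bm{x}/(\bm{y}^{T}\bm{x})$, this reduces the entire theorem to two working identities, one per perturbation direction.

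Parts (i) and (ii) are direct. For (i), an in-direction perturbation leaves $\bm{L}_1$ unchanged, so every eigenvalue of $\bm{L}_1$ remains an eigenvalue of $\bm{L}_p(\varepsilon\bm{\Delta})$ for all $\varepsilon$; continuity and simplicity of $\lambda_2$ then yield $s(\bm{\Delta})=0$. For (ii), $\bm{L}_p'(0)$ has only its lower block nonzero, so using $\bm{x}_1=\bm{0}$ one obtains $s(\bm{\Delta})=\bm{y}_2^{T}\bm{D}_{\bm{\Delta}}\bm{x}_2/(\bm{y}_2^{T}\bm{x}_2)>0$ from the strict positivity of $\bm{x}_2,\bm{y}_2$ and nonnegativity of $\bm{D}_{\bm{\Delta}}$.

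Parts (iii) and (iv) rest on the opposite-direction formula $s(\bm{\Delta})(\bm{y}_2^{T}\bm{x}_2)=-\bm{y}_2^{T}\bm{C}(\bm{L}_1-\lambda_2\bm{I})^{-1}\bm{\Delta}\bm{x}_2$. I expand $(\bm{L}_1-\lambda_2\bm{I})^{-1}$ in the biorthogonal eigenbasis of $\bm{L}_1$, using right/left null vectors $\bm{1}$ and $\bm{w}>\bm{0}$ normalized by $\bm{w}^{T}\bm{1}=1$, together with the remaining pairs $(\bm{v}_i,\bm{w}_i)$ for eigenvalues $\mu_i>0$. The apparent $1/\lambda_2$ singularity in the Perron contribution cancels exactly, via the identity $\bm{y}_2^{T}\bm{C}\bm{1}=\lambda_2\bm{y}_2^{T}\bm{1}$ (read off from $\bm{y}_2^{T}(\bm{L}_2+\bm{D}_{\bm{C}})=\lambda_2\bm{y}_2^{T}$ applied to $\bm{1}$, using $\bm{L}_2\bm{1}=\bm{0}$ and $\bm{D}_{\bm{C}}\bm{1}=\bm{C}\bm{1}$), leaving the working identity
\[
s(\bm{\Delta})\,(\bm{y}_2^{T}\bm{x}_2) = (\bm{y}_2^{T}\bm{1})\,(\bm{w}^{T}\bm{\Delta}\bm{x}_2)-\sum_{i\ge 2}\frac{(\bm{y}_2^{T}\bm{C}\bm{v}_i)(\bm{w}_i^{T}\bm{\Delta}\bm{x}_2)}{\mu_i-\lambda_2}.
\]
The first ``Perron'' term is strictly positive for any nonzero $\bm{\Delta}\ge\bm{0}$; the ``non-Perron'' sum is $O(1)$ as $\lambda_2\to 0^{+}$. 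Part (iii)(a) is immediate. For (iii)(b), fix $k_0$ maximizing $w_k$ and restrict to single-link perturbations $\bm{\Delta}=\bm{e}_{k_0}\bm{e}_l^{T}$: the Perron term admits the uniform lower bound $(\bm{y}_2^{T}\bm{1})\,w_{k_0}\min_l(\bm{x}_2)_l$ while the remainder is uniformly bounded above in $l$, defining a $\delta(\bm{L}_1)>0$. For (iii)(c), zero column-sums of $\bm{L}_1$ force $\bm{w}=\bm{1}/n$, yielding the componentwise lower bound $\bm{w}^{T}\bm{\Delta}\bm{x}_2\ge(\min_l(\bm{x}_2)_l/n)\sum_{kl}\bm{\Delta}_{kl}$, uniform over all $\bm{\Delta}\ge\bm{0}$, and the same comparison argument carries through.

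Part (iv), the converse, is where I expect the main obstacle. The strategy is to engineer $\bm{L}_1$ to admit a tiny positive eigenvalue $\mu_2$, so that the prefactor $1/(\mu_2-\lambda_2)$ blows up and the non-Perron term can dominate the Perron term with opposite sign. Choosing $\bm{\Delta}\ge\bm{0}$ so that $(\bm{y}_2^{T}\bm{C}\bm{v}_2)(\bm{w}_2^{T}\bm{\Delta}\bm{x}_2)>0$ then produces $s(\bm{\Delta})<0$. The difficulty is that $\bm{\Delta}$ must be entrywise nonnegative, while $\bm{v}_2$ and $\bm{w}_2$ necessarily change sign; one must therefore place a single link $\bm{\Delta}=\bm{e}_k\bm{e}_l^{T}$ at coordinates where the sign patterns of $\bm{w}_2$, $\bm{x}_2$ and $\bm{y}_2^{T}\bm{C}\bm{v}_2$ are simultaneously favourable, and conversely tune the cutset $\bm{C}$ to ensure such coordinates exist. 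This reverse-engineering of the eigenvector ``encoding'' mentioned in the informal statement preceding the theorem is what pins down the correct $\bm{C}$ and $\bm{\Delta}$.
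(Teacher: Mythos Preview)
Your setup, the eigenvector computation via Lemma~\ref{lem:ReducedEvectors}, and parts (i)--(ii) are correct and essentially the same as the paper's (the paper handles the ``arbitrary size'' in (i)--(ii) by iterating the first-order step, which you should make explicit). The biorthogonal resolvent expansion and the identity $\bm{y}_2^{T}\bm{C}\bm{1}=\lambda_2\,\bm{y}_2^{T}\bm{1}$ are a nice touch, but the way you use them introduces a real gap in part (iii).

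\medskip
\textbf{The gap in (iii)(a).} There is no smallness assumption on $\lambda_2$ in (iii)(a), so the statement ``Perron term positive, non-Perron $O(1)$'' proves nothing: after your cancellation both terms are $O(1)$ and can have either sign in total. The paper's fix is to \emph{choose} $\bm{\Delta}$ so that $\bm{\Delta}\bm{x}_2=\bm{1}$ (take $\bm{\Delta}_{ik}=1/(\bm{x}_2)_k$ for a fixed column $k$, zero elsewhere). Then $\bm{w}_i^{T}\bm{\Delta}\bm{x}_2=\bm{w}_i^{T}\bm{1}=0$ for all $i\ge 2$ by biorthogonality, the non-Perron sum vanishes identically, and $s(\bm{\Delta})>0$ follows from the Perron term alone. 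Your formula makes this transparent, but you must actually pick this $\bm{\Delta}$.

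\medskip
\textbf{The gap in (iii)(b),(c).} Your cancellation removes the $1/\lambda_2$ factor that is precisely the paper's mechanism for proving dominance when $\lambda_2$ is small. After cancellation your Perron and non-Perron terms are both $O(1)$, and ``uniformly bounded in $l$'' does not yield a threshold depending only on $\bm{L}_1$: your non-Perron term still carries $\bm{y}_2^{T}\bm{C}\bm{v}_i$, which depends on the slave data. The repair is to re-use the very identity you employed for the cancellation: since $\bm{y}_2^{T}\bm{C}\ge\bm{0}$ entrywise with $\bm{y}_2^{T}\bm{C}\bm{1}=\lambda_2\,\bm{y}_2^{T}\bm{1}$, one has $|\bm{y}_2^{T}\bm{C}\bm{v}_i|\le \lambda_2\,(\bm{y}_2^{T}\bm{1})\,\|\bm{v}_i\|_\infty$. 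Feeding this in, the non-Perron sum is bounded by $\lambda_2\,(\bm{y}_2^{T}\bm{1})$ times a quantity depending only on $\bm{L}_1$ (and, for (c), the common factor $\sum_{k,l}\Delta_{kl}(\bm{x}_2)_l$ cancels against the same factor in the Perron term once $\bm{w}=\bm{1}/n$). That yields a genuine $\delta(\bm{L}_1)$.

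\medskip
\textbf{Comparison with the paper.} The paper avoids all of this by \emph{not} cancelling: it works at the vector level with
\[
-(\bm{L}_1-\lambda_2\bm{I})^{-1}\bm{\Delta}\bm{x}_2=\frac{\beta_1}{\lambda_2}\bm{1}-\sum_{k\ge2}\frac{\beta_k}{\alpha_k-\lambda_2}\bm{X}_k,
\]
where the $\beta_k$ are the coordinates of $\bm{\Delta}\bm{x}_2$ in the right-eigenbasis of $\bm{L}_1$. For $\beta_1>0$ and $\lambda_2$ small this vector is entrywise positive, a condition that involves only $\bm{L}_1$ and $\lambda_2$; then positivity of $\bm{y}_2$ and nonnegativity of $\bm{C}$ finish the job. (Incidentally, since the master component is strongly connected, $\beta_1(\bm{e}_k)=w_k>0$ for \emph{every} $k$, so your ``$k_0$ maximizing $w_k$'' is legitimate, and in fact any $k_0$ works.) Your scalar route with the cancellation can be made to work with the extra $O(\lambda_2)$ bound above, but the paper's vector-level argument separates the $\bm{L}_1$-dependence more cleanly.

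\medskip
For (iv) your strategy---pick a real eigenpair $(\alpha_k,\bm{X}_k)$ of $\bm{L}_1$ with $\alpha_k$ close to $\lambda_2$, choose $\bm{\Delta}\bm{x}_2$ in the span of $\bm{1}$ and $\bm{X}_k$, and then place the cutset $\bm{C}$ on a column where $(\bm{L}_1-\lambda_2\bm{I})^{-1}\bm{\Delta}\bm{x}_2$ is positive---is exactly what the paper does; your anticipated ``obstacle'' about sign patterns is handled there by the shift $\bm{\Delta}\bm{x}_2=\beta_m\bm{1}+\bm{X}_k$ with $\beta_m=\max_i(-(\bm{X}_k)_i)_+$.
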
      
                
                    %%%%%%%%%%%%%%%%      THEOREM 2    %%%%%%%%%%%%%%%%%%%%%%%%%%%%%
Let us make a few remarks. In the proof of items \rm{(i)} and \rm{(ii)} we repeatedly apply a perturbation result in order to handle non-small perturbations. This is not possible in items \rm{(iii)}\rm{(a) -- (c)} and item \rm{(iv)} because every perturbation in opposite direction of the cutset makes the graph  strongly connected and thus qualitatively changes the network's structure. In item (iii) a), the perturbation can be realised by turning an arbitrary node in the slave component into a hub having directed connections to all the nodes in the master component.
%Items \rm{(b)} and \rm{(c)} in \rm{(iii)} tell that if the spectral gap is small enough we can increase the synchronizability by adding only one link (or more) in opposite direction of the cutset. If additionally, the Laplacian has zero column-sums, even an arbitrary modification in opposite direction of the cutset increases the synchronizability. 
As we have shown numerically in the example in Figure \ref{Fig_example1} and also in \cite{pade2015}, not all perturbations in opposite direction of the cutset are increasing the synchronizability when $\bm{L}_1$ does not have zero column-sums. This is stated in item \rm{(iv)}: an example where the situation described in this item occurs is when the master component is an undirected sub-network, i.e when $L_1$ is symmetric, in which case all the eigenvalues are nonnegative.
%
%This shows that in general, it is not true that networks in a master-slave configuration are more (or even less) stable than strongly connected networks.\\
%

In items \rm{(ii)-(iv)} we assume that the spectral gap is an eigenvalue of $\bm{L_{2}}+\bm{D_{C}}$. This happens for instance when the entries in the cutset $\bm{C}$ are very small (see Lemma \ref{lem:PositivOfEvectors}).
Topologically, this means that  the master component is very well connected in comparison to the intensity and/or density of the driving force. It is worth remarking that the connection density of the second component does not play a role in this scenario.
%Notice furthermore that Item \rm{(iii)(b)} can be refined in the following way: for any $2\leq k\leq N$, either there exists a perturbation $\varepsilon \bm{\Delta}_k$ adding $k$ links directed to the graph of $\bm{L}_1$ and leading to a decrease of the synchronization, or there exists a perturbation $\varepsilon \bm{\Delta}_{N-k}$ adding $N-k$ links directed to the graph of $\bm{L}_1$ and leading to an increase of the synchronization (provided $\lambda_2$ be small enough).\\
The rest of the paper is devoted to the proofs of our two main results and of other results completing our study.

\section[Motion of the spectral gap]{Synchronizability in Undirected Networks}\label{sec:Stab_Undir}

In this section we investigate weighted undirected networks and their behaviour under both, undirected and directed structural perturbations: our first main result (Theorem \ref{Thm_Dissec_of_graphs}) is proved here. The advantage of undirected networks is that tools from algebraic graph theory allow us to relate the coupling structure to algebraic properties of the associated graph Laplacian. We first look at the simpler case of undirected structural perturbations before proceeding to directed perturbations, i.e. the proof of Theorem \ref{Thm_Dissec_of_graphs}. 

\subsection{Undirected Structural Perturbations of Undirected Networks}
The following result was first established by M. Fiedler \cite{Fiedler1973} for unweighted graphs. Here, we present a straightforward generalization to weighted graphs.

\begin{proposition}\label{lem:FiedlerGrowth} 
Let $\mathcal{G}^{\prime}$ be a graph and $\mathcal{G}\subseteq\mathcal{G}^{\prime}$ a subgraph with corresponding Laplacians $\bm{L_W}$ and $\bm{L_{W^{\prime}}}$ respectively. Then we have
\[
\lambda_{2}\left(\bm{L_W}\right)\leq\lambda_{2}\left(\bm{L_{W^{\prime}}}\right).
\]
\end{proposition}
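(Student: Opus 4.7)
The strategy is to combine the Courant--Fischer min-max characterization of $\lambda_2$ with the pointwise monotonicity of the Laplacian quadratic form under the subgraph relation. From $\bm{L_W}=\bm{D_W}-\bm{W}$ one obtains by direct computation the standard identity
\[
x^T \bm{L_W}\, x \;=\; \sum_{i<j} W_{ij}\,(x_i-x_j)^2,
\]
which exhibits $\bm{L_W}$ as symmetric positive semidefinite with $\mathbf{1}$ in its kernel. Since $\lambda_1(\bm{L_W})=0$ is associated with the eigenvector $\mathbf{1}$, the Courant--Fischer theorem (in its Rayleigh-quotient form) gives
\[
\lambda_2(\bm{L_W}) \;=\; \min_{\substack{x\neq 0 \\ x\perp\mathbf{1}}} \frac{x^T \bm{L_W}\, x}{x^T x}.
\]

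Next I would make the following key observation. Interpreting $\mathcal{G}\subseteq\mathcal{G}^{\prime}$ as a relation on a common vertex set $\mathcal{N}=\mathcal{N}^{\prime}$ (so that both Laplacians live in $\mathbb{R}^{N\times N}$), the matrix $\bm{L_{W^{\prime}}}-\bm{L_W}$ is itself the Laplacian of the weighted graph on $\mathcal{N}$ with nonnegative edge weights $W^{\prime}_{ij}-W_{ij}\geq 0$, since the diagonal of $\bm{L_{W^{\prime}}}-\bm{L_W}$ records the row sums of $\bm{W^{\prime}}-\bm{W}$. Applying the quadratic form identity to this difference yields
\[
x^T\bigl(\bm{L_{W^{\prime}}}-\bm{L_W}\bigr)\,x \;=\; \sum_{i<j}\bigl(W^{\prime}_{ij}-W_{ij}\bigr)(x_i-x_j)^2 \;\geq\; 0 \qquad \forall\, x\in\mathbb{R}^N,
\]
which says $\bm{L_W}\preceq \bm{L_{W^{\prime}}}$ in the Loewner order. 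Plugging this pointwise bound into the Courant--Fischer expression by minimizing both sides over the common feasible set $\{x\perp\mathbf{1},\ \|x\|=1\}$ gives the conclusion $\lambda_2(\bm{L_W})\leq \lambda_2(\bm{L_{W^{\prime}}})$ immediately.

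The argument is essentially bookkeeping once the quadratic form identity is in hand. The only subtlety worth flagging is the reduction to a common vertex set: if $\mathcal{N}\subsetneq\mathcal{N}^{\prime}$, one must be careful because naively padding $\bm{L_W}$ with rows and columns of zeros (corresponding to added isolated vertices) enlarges its kernel and typically forces $\lambda_2=0$ in the padded matrix. In the paper, however, this proposition is invoked only for perturbations that add edges or increase weights while preserving the vertex set, so this technicality is harmless. Fiedler's original unweighted statement is recovered verbatim by setting all nonzero weights to $1$.
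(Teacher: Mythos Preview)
Your proof is correct and follows essentially the same route as the paper: both invoke the Courant--Fischer characterization of $\lambda_2$ over $\mathbf{1}^\perp$ and use that $\bm{L_{W^{\prime}}}-\bm{L_W}$ is positive semidefinite. The paper splits the minimum as $\min(A+B)\geq \min A + \min B$ and then drops the second term, while you phrase it as a Loewner ordering $\bm{L_W}\preceq\bm{L_{W^{\prime}}}$ and minimize both sides; these are the same computation. Your additional remarks (the explicit quadratic form identity and the caveat about a common vertex set) are welcome clarifications not spelled out in the paper.
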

\begin{proof}
Using the Courant-Fischer theorem \cite{Horn1985} we have
\begin{eqnarray*}
\lambda_2(\bm{L_{W^{\prime}}})&=&\min_{\bm{v}\in\bm{1}^{\perp}}[\bm{v}^T \bm{L_{W} v}+\bm{v}^T(\bm{L_{W^{\prime}}} -\bm{L_{W}})\bm{v}]\\
&\geq&\min_{\bm{v}\in\bm{1}^{\perp}}[\bm{v}^T \bm{L_{W}v}]+\min_{\bm{v}\in\bm{1}^{\perp}}[\bm{v}^T(\bm{L_{W^{\prime}}} - \bm{L_{W}})\bm{v}]\\
&\geq&\min_{\bm{v}\in\bm{1}^{\perp}}[\bm{v}^T \bm{L_{W}v}]\\
&=&\lambda_2(\bm{L_{W}}).
\end{eqnarray*}
\end{proof}

Hence, increasing weights or adding links in an undirected network will never decrease the synchronizability. Perturbations which leave the spectral gap invariant can and do exist though. However, identifying graphs $\mathcal{G}\neq\mathcal{G}^\prime$ with corresponding Laplacians $\bm{L_W}$ and $\bm{L_{W^\prime}}$ respectively for which $\lambda_2(\bm{L_W})=\lambda_2(\bm{L_{W^\prime}})$ is a highly nontrivial problem. Fortunately, using perturbation theory of eigenvalues (Lemma \ref{lem:EVGrowth}) we can identify structural perturbations which leave the spectral gap unchanged up to first order. This approach will also enable us to investigate the effect of directed perturbations of undirected networks in the proof of Theorem \ref{Thm_Dissec_of_graphs}.

\subsection{Proof of Theorem \ref{Thm_Dissec_of_graphs}}\label{subsec:DirUndir}

The following standard result from matrix theory is the technical starting point for the rest of this article \cite{Horn1985}. It allows us to determine the dynamical effect of structural perturbations up to first order in the strength of the perturbation.

\begin{lemma}[Spectral Perturbation \cite{Horn1985}]\label{lem:EVGrowth}
Let $\lambda$ be a simple eigenvalue of $\bm{L}\in\mathbb{R}^{N\times N}$
with corresponding left and right eigenvectors $\bm{u},\bm{v}$ and
let $\tilde{\bm{L}}\in\mathbb{R}^{N\times N}$. Then, for $\varepsilon$
small enough there exists a smooth family $\lambda\left(\varepsilon\right)$
of simple eigenvalues of $\bm{L}+\varepsilon\tilde{\bm{L}}$ with
$\lambda\left(0\right)=\lambda$ and
\begin{equation}
\lambda^{\prime}\left(0\right)=\frac{\bm{u}^T\tilde{\bm{L}}\bm{v}}{\bm{u}^T\bm{v}}.\label{eq:Eigenv_Pert}
\end{equation}
\end{lemma}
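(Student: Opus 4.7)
The plan is to establish the result via the implicit function theorem combined with a direct differentiation of the eigenvalue equation. The starting observation is that simplicity of $\lambda$ means it is a simple root of the characteristic polynomial $p(\mu,\varepsilon)=\det(\bm{L}+\varepsilon\tilde{\bm{L}}-\mu\bm{I})$, i.e.\ $\partial_\mu p(\lambda,0)\neq 0$. The implicit function theorem then produces a $C^\infty$ (in fact analytic) function $\varepsilon\mapsto\lambda(\varepsilon)$ defined on a neighborhood of $0$ with $\lambda(0)=\lambda$ and $p(\lambda(\varepsilon),\varepsilon)=0$. A parallel argument, applied to the rank condition on $\bm{L}+\varepsilon\tilde{\bm{L}}-\lambda(\varepsilon)\bm{I}$, or equivalently a standard resolvent/Riesz-projection argument, yields a smooth family of right eigenvectors $\bm{v}(\varepsilon)$ with $\bm{v}(0)=\bm{v}$.

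Once smoothness is in hand, I would differentiate the identity
\[
(\bm{L}+\varepsilon\tilde{\bm{L}})\,\bm{v}(\varepsilon)=\lambda(\varepsilon)\,\bm{v}(\varepsilon)
\]
at $\varepsilon=0$, obtaining
\[
\tilde{\bm{L}}\bm{v}+\bm{L}\bm{v}'(0)=\lambda'(0)\,\bm{v}+\lambda\,\bm{v}'(0).
\]
Left-multiplying by the left eigenvector $\bm{u}^T$ and using $\bm{u}^T\bm{L}=\lambda\bm{u}^T$ makes the terms involving the unknown $\bm{v}'(0)$ cancel, leaving
\[
\bm{u}^T\tilde{\bm{L}}\bm{v}=\lambda'(0)\,\bm{u}^T\bm{v},
\]
from which formula \eqref{eq:Eigenv_Pert} follows upon dividing by $\bm{u}^T\bm{v}$.

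The main obstacle is therefore reduced to verifying that the denominator $\bm{u}^T\bm{v}$ is nonzero; this is where the simplicity hypothesis does all the work. The cleanest route is via the Jordan form of $\bm{L}$: were $\bm{u}^T\bm{v}=0$, then $\bm{v}$ would lie in the range of $(\bm{L}-\lambda\bm{I})$ restricted to the invariant complement picked out by $\bm{u}$, meaning there exists $\bm{w}$ with $(\bm{L}-\lambda\bm{I})\bm{w}=\bm{v}$; this produces a length-$2$ Jordan block at $\lambda$, contradicting the assumption that $\lambda$ is a simple eigenvalue. Equivalently, one may invoke the spectral projector $\bm{P}_\lambda=\bm{v}\bm{u}^T/(\bm{u}^T\bm{v})$ associated to a simple eigenvalue and note that its trace $1$ forces $\bm{u}^T\bm{v}\neq 0$. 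With this step secured, the derivation above yields the stated formula and the lemma is proved.
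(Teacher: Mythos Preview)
Your argument is correct and is the standard derivation of this classical perturbation formula. Note, however, that the paper does not actually prove this lemma: it is stated with a reference to Horn and Johnson and used as a black box, so there is no ``paper's own proof'' to compare against. Your write-up would serve perfectly well as a self-contained justification should one be desired.
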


So far we have only considered undirected perturbations. If we allow for directed perturbations the motion of the spectral gap over the real line is not constrained to one direction any more. Even for undirected networks, so far there is no general classification of directed perturbations according to their impact on the spectral gap. However, using results about the structure of undirected networks we can identify classes of directed perturbations which preserve the property that the slope of the spectral gap is nonnegative. The following result is due to M. Fiedler \cite{Fiedler1975}.

%Spectral Decomposition
\begin{theorem}\label{thm:FiedlerDecomposition}\emph{[Fiedler, 1975]}
Let $\mathcal{G}$ be a connected weighted graph with Fiedler vector $\bm{v}$. For $r\geq 0$ definte $M(r)=\left\{ i | v_i+r\geq 0\right\}$. Then the following holds \\
(i) The subgraph $\mathcal{G}_r\subseteq\mathcal{G}$ induced by the set of nodes $M(r)$ is connected. \\
(ii) If the Fiedler vector $\bm{v}$ fulfils $v_i\neq 0$ for all $1\leq i\leq n$, the set of edges $\{i,j\}$ such that $v_iv_j<0$ defines a cut $C\subset\mathcal{E}$ and the resulting two components are connected.  
\end{theorem}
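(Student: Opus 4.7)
The plan is to prove (i) by contradiction using the Courant--Fischer variational characterization $\lambda_2=\min_{\bm{\phi}\perp\mathbf{1},\,\bm{\phi}\neq 0}\bm{\phi}^T\bm{L_W}\bm{\phi}/\|\bm{\phi}\|^2$, after which (ii) drops out by applying (i) to both $\bm{v}$ and $-\bm{v}$ at $r=0$. Suppose towards contradiction that $\mathcal{G}_r$ has connected components $M_1,\dots,M_p$ with $p\geq 2$. For each $k$ I would introduce the test vector $\bm{u}^{(k)}\in\mathbb{R}^N$ defined by $u^{(k)}_i=v_i+r$ on $M_k$ and $0$ elsewhere. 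These are nonnegative, have disjoint supports, and --- crucially --- because distinct $M_k,M_{k'}$ are different components of the induced subgraph, \emph{no} edge of $\mathcal{G}$ joins them, which makes every cross quadratic form $(\bm{u}^{(k)})^T \bm{L_W}\bm{u}^{(k')}$ vanish and thereby decouples the Rayleigh quotient.

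The technical heart is a comparison between $(\bm{u}^{(k)})^T \bm{L_W}\bm{u}^{(k)}$ and $(\bm{u}^{(k)})^T \bm{L_W}\bm{w}$, with $\bm{w}:=\bm{v}+r\mathbf{1}$. Expanding both via the symmetric identity $\bm{x}^T\bm{L_W}\bm{y}=\tfrac{1}{2}\sum_{(i,j)\in\mathcal{E}}w_{ij}(x_i-x_j)(y_i-y_j)$ and splitting edges into those internal to $M_k$ and those crossing from $M_k$ into $\mathcal{N}\setminus M(r)$ (no other edges contribute), a routine bookkeeping produces
\begin{equation*}
(\bm{u}^{(k)})^T \bm{L_W}\bm{u}^{(k)} - (\bm{u}^{(k)})^T \bm{L_W}\bm{w} \;=\; \sum_{\substack{i\in M_k\\ j\notin M(r)}} w_{ij}(v_i+r)(v_j+r)\;\leq\; 0,
\end{equation*}
the sign coming from $v_i+r\geq 0$ on $M_k$ and $v_j+r<0$ off $M(r)$. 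Since $\bm{L_W}\mathbf{1}=0$ and $\bm{L_W}\bm{v}=\lambda_2\bm{v}$, we have $\bm{L_W}\bm{w}=\lambda_2\bm{v}$; a direct computation then gives $(\bm{u}^{(k)})^T\bm{v}=\|\bm{u}^{(k)}\|^2-r\,a_k$ with $a_k:=\sum_{i\in M_k}(v_i+r)\geq 0$, so that $(\bm{u}^{(k)})^T\bm{L_W}\bm{u}^{(k)}\leq\lambda_2\|\bm{u}^{(k)}\|^2-\lambda_2 r\,a_k$.

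I would then form $\bm{\phi}=c_1\bm{u}^{(1)}+c_2\bm{u}^{(2)}$, choosing $c_1,c_2$ so that $\bm{\phi}\perp\mathbf{1}$ while $\bm{\phi}\neq 0$ (possible whenever at least one $a_k>0$, which can be arranged by relabelling components). The vanishing cross terms give $\bm{\phi}^T \bm{L_W}\bm{\phi}\leq \lambda_2\|\bm{\phi}\|^2-\lambda_2 r(c_1^2 a_1+c_2^2 a_2)$, strictly less than $\lambda_2\|\bm{\phi}\|^2$ for $r>0$, contradicting Courant--Fischer. In the marginal case $r=0$ the inequality collapses to equality, which forces $\bm{\phi}$ to be a Fiedler eigenvector; but $\bm{\phi}$ is supported inside $M(0)\subsetneq\mathcal{N}$, so the simplicity of $\lambda_2$ (hence uniqueness up to scale of the Fiedler vector) combined with the sign pattern of $\bm{v}$ outside $M(0)$ rules this out. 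Assertion (ii) now follows immediately: when $v_i\neq 0$ for all $i$, applying (i) at $r=0$ to $\bm{v}$ shows $M^+=\{v_i>0\}$ induces a connected subgraph, and applying it to $-\bm{v}$ (also a Fiedler vector) does the same for $M^-=\{v_i<0\}$; since $\{M^+,M^-\}$ partitions $\mathcal{N}$, the edge set $C=\{(i,j)\in\mathcal{E}:v_iv_j<0\}$ is exactly the cut between these two connected pieces.

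The main obstacle is the boundary bookkeeping in the central identity: one must carefully isolate the edges that straddle $\partial M(r)$ to exhibit the non-positive boundary term, and verify that \emph{all} other edge contributions to $(\bm{u}^{(k)})^T\bm{L_W}\bm{u}^{(k)}-(\bm{u}^{(k)})^T\bm{L_W}\bm{w}$ cancel. A secondary subtlety is the degenerate case $r=0$ (precisely the one needed for (ii)), where the Rayleigh-quotient inequality is no longer strict and the contradiction must be closed via simplicity of $\lambda_2$ together with a support-versus-sign argument for $\bm{\phi}$.
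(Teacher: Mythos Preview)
The paper does not prove this theorem; it is quoted from Fiedler's 1975 paper and used as a black box in the proof of Theorem~\ref{Thm_Dissec_of_graphs}, so there is no in-paper argument to compare against. Your variational route via Courant--Fischer is a legitimate alternative to Fiedler's original proof, which proceeds through the theory of irreducible nonnegative matrices and principal submatrices rather than through Rayleigh quotients.

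Your argument is essentially correct, but two points need tightening. First, the parenthetical ``possible whenever at least one $a_k>0$'' is not enough: if $a_1>0$ but $a_2=0$ then $\bm{u}^{(2)}=\bm{0}$, the constraint $c_1a_1+c_2a_2=0$ forces $c_1=0$, and $\bm{\phi}=\bm{0}$. You need $a_k>0$ for \emph{both} components used. This does in fact hold for every component: if $a_k=0$ then $v_i=-r$ for all $i\in M_k$; picking $i\in M_k$ with a neighbour $j\notin M(r)$ (such $i$ exists since $\mathcal{G}$ is connected and no edge of $\mathcal{G}$ joins distinct $M_k$'s), the eigenvector equation gives $-\lambda_2 r=\sum_{j'}w_{ij'}(v_i-v_{j'})>0$, a contradiction. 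Second, your $r=0$ endgame invokes simplicity of $\lambda_2$, which the theorem as stated does not assume; Fiedler's result holds for any $\lambda_2$-eigenvector. This is harmless in the paper's context (the genericity hypothesis in Theorem~\ref{Thm_Dissec_of_graphs} forces simplicity), and for part~(ii) specifically the hypothesis $v_i\neq 0$ makes every boundary term $w_{ij}v_iv_j$ strictly negative, so the Rayleigh inequality is already strict and simplicity is unnecessary there. For part~(i) at $r=0$ in full generality, however, a gap remains.
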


This result enables us to prove Theorem \ref{Thm_Dissec_of_graphs}.

\begin{proof} [Proof of Theorem \ref{Thm_Dissec_of_graphs}]
We first mention that by the genericity assumption in the Theorem we have that $\lambda_2(\bm{L_W})$ is simple and the Fiedler vector $\bm{v}$ has nonzero entries (see Theorems 3.1 and 3.2 in \cite{Poignard2017}). As a consequence, we can apply Theorem \ref{thm:FiedlerDecomposition}.\\
% Indeed, this property is structurally generic in the following sense. Consider the set of Laplacians corresponding to networks with identical coupling topologies but potentially different weights. In this set, the set of such Laplacians with nonzero entries in the Fiedler vector is dense and its complement is of zero Lebesgue measure 
ad (i). The first assertion is a consequence of Theorem \ref{thm:FiedlerDecomposition} (ii).
Indeed, let $\mathcal{N}$ be the set of nodes of $\mathcal{G}$. Choosing the subgraph $\mathcal{G}_1$ induced by the nodes
$M(0)=\{i\in\mathcal{N}|v_i\geq 0\}$ and $\mathcal{G}_2$  as its complement yields two  connected subgraphs. 
Now, by Lemma \ref{lem:EVGrowth} we have $s_{k\rightarrow l}=v_l^2-v_lv_k > 0$ as $v_k$ and $v_l$ have opposite signs.
Now assume there is another decomposition of the nodes $I_1\subset\mathcal{N}$ and $I_2\subset\mathcal{N}$ such that $s_{k\rightarrow l}>0$ for all $k\in I_1$ 
and $l\in I_2$. Then, necessarily there must be two nodes $i\in I_1$, $j\in I_2$ such that $v_i$ and $v_j$ have the same sign.
Otherwise, it would be the same decomposition as before. But then either $s_{i\rightarrow j}<0$ or $s_{j\rightarrow i}<0$ 
or $s_{i\rightarrow j}=s_{j\rightarrow i}=0$ which is a contradiction.\\
%As eigenvectors are unique up to multiplication by a scalar this decomposition is unique.
%\textcolor{red}{[FALSE. Prove that any other decomposition $G_1, G_2$ is necessarily equal to $\mathcal{G}_1,\mathcal{G}_2$]}\\
ad (ii).  Assume without restriction that $k,l\in\mathcal{G}_1$ and $v_l\geq v_k$. Then we have $s_{k\rightarrow l} = v_l(v_l-v_k)\geq 0$. For the opposite link we obtain equivalently $s_{l\rightarrow k}= v_k(v_k-v_l)\leq 0$ as all the involved $v_i$ are positive.\\
ad (iii) Consider again the (connected) subgraphs $\mathcal{G}_1$ and $\mathcal{G}_2$ induced by  $M(0)$ as in (i). Assume without restriction that the $v_i>0$ are numbered by $i=1,\ldots,n_1$ and form a decreasing sequence as well as the $v_i<0$ are numbered by $i=n_1+1,\ldots,n$ and form a decreasing sequence. Now, set $\varepsilon=\frac{1}{2}\min\{|v_i-v_j| | i,j\geq n_1+1, v_i\neq v_j \}$ and define the positive numbers $d_i=-(v_{n_1+i}-\varepsilon)$ for $1\leq i\leq n-n_1$. Then, by Theorem \ref{thm:FiedlerDecomposition} we have that the subgraphs $\mathcal{G}_{1i}$ induced by $M(d_i)$ form an increasing sequence of connected graphs. Let now $j>i$. For a node $k$ in $\mathcal{G}_{1i}\cap\mathcal{G}_2$ and a node $l$ in $\mathcal{G}_{1j}\setminus\mathcal{G}_{1i}$  we have $v_k>v_l$ by construction. So using Lemma  \ref{lem:FiedlerGrowth} again yields $s_{l\rightarrow k} = v_k(v_k-v_l)<0$ as all the involved $v_i$ are negative.
The second decomposition starting from the subgraph $\mathcal{G}_2$ induced by the nodes $\{i|v_i\leq 0\}$  is obtained considering the Fiedler vector $-\bm{v}$.
\end{proof}

\begin{exmp}
In order to illustrate Theorem \ref{Thm_Dissec_of_graphs} consider again the graph from Figure \ref{Example2} and let $\bm{v}$ be its Fiedler vector, i.e. the normalized eigenvector associated to the spectral gap. Then, the cut from item (i) is obtained by dividing nodes according to the sign of the corresponding entry in $\bm{v}$. In Figure \ref{Example2} it corresponds to red and blue nodes. For the complete picture it remains to determine what exactly is the effect of perturbing among nodes of the same type of color beyond item (ii). This is done by comparing the corresponding entries in $\bm{v}$, color coded in Figure \ref{Fig_DissecGraphs}. Then, item $(iii)$ states that increasing weights or adding weak connections from dark red to light red nodes decreases the synchronizability, while adding an opposite link increases the synchronizability. The same is valid for the blue nodes.

% The Fiedler vector is given by $\bm{v}=(0.57,0.5,0.09,-0.06,-0.38,-0.34,-0.38)$ and the nodes are color coded with shades of red ($v_i>0$) and blue ($v_i<0$). 
\begin{figure}[H]
\includegraphics[scale=0.5]{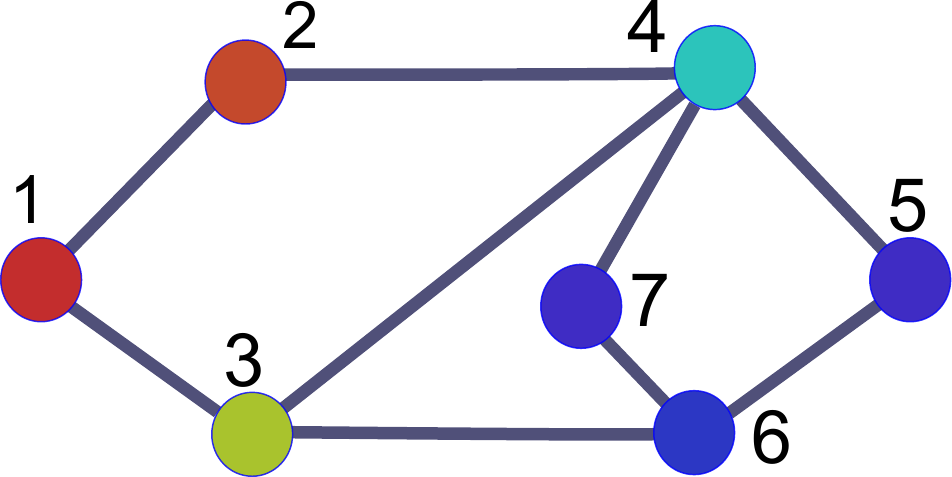}
\vspace{0.4cm}\caption[~~~~Dissection of graphs.]{Illustration of Theorem \ref{Thm_Dissec_of_graphs}. The subgraphs $\mathcal{G}_1$ and $\mathcal{G}_2$ are given by the nodes $\{1,2,3\}$ and $\{4,5,6,7\}$ respectively The Fiedler vector is given by $\bm{v}=(0.57,0.5,0.09,-0.06,-0.38,-0.34,-0.38)$ and the nodes are color coded with shades of red ($v_i>0$) and blue ($v_i<0$).}
\label{Fig_DissecGraphs}
\end{figure}
%Item (i) states that increasing weights between nodes in shades of red and nodes in shades of blue increases the synchronizability as well as adding weak directed links  - independent of the direction of the perturbation.
%From item (ii) it follows that increasing weights between blue nodes or adding weak connections between blue nodes will have opposite effects, depending on the direction of the perturbation. The same is valid for red nodes. For the complete picture it remains to determine what exactly is the effect of perturbing among nodes of the same type of color. Item $(iii)$ states that increasing weights or adding weak connections from dark red to light red nodes will decrease the synchronizability. The same is valid for blue nodes.
\end{exmp}

\subsection{Perturbations Leaving Invariant the Spectral Gap at First Order}

In this section we characterize perturbations which have no impact on the first order term of the spectral gap.

\begin{lemma}\label{lem:IdEvectors}
Let $\mathcal{G}$ be a connected weighted graph with Fiedler vector $\bm{v}$. Then:\\
(i) $v_k=v_l \iff s_{kl}=0 $.\\
(ii) $v_k=v_l \Rightarrow s_{k\rightarrow l}=s_{l\rightarrow k}=0$.
\end{lemma}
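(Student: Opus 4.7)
The plan is to apply the spectral perturbation formula of Lemma \ref{lem:EVGrowth} directly. Since $\bm{L_W}$ is symmetric and, by the standing genericity assumption, $\lambda_2(\bm{L_W})$ is simple, the left and right eigenvectors of $\lambda_2(\bm{L_W})$ coincide with the normalized Fiedler vector $\bm{v}$, which satisfies $\bm{v}^T \bm{v} = 1$. Lemma \ref{lem:EVGrowth} therefore reduces both change rates to quadratic forms in $\bm{v}$:
$$ s_{kl} = \bm{v}^T \bm{L}_{kl}\, \bm{v}, \qquad s_{k\to l} = \bm{v}^T \bm{L}_{k\to l}\, \bm{v}. $$

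For (i), I would read off from \eqref{PertMatrix} that $\bm{L}_{kl}$ has $+1$ on the diagonal at $(k,k)$ and $(l,l)$ and $-1$ at the off-diagonal positions $(k,l)$ and $(l,k)$. A one-line computation then gives $\bm{v}^T \bm{L}_{kl}\, \bm{v} = (v_k - v_l)^2$, which is nonnegative and vanishes if and only if $v_k = v_l$. This yields both directions of the equivalence.

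For (ii), the matrix $\bm{L}_{k\to l}$ has only two nonzero entries, both lying in row $l$: namely $+1$ at $(l,l)$ and $-1$ at $(l,k)$. Evaluating the quadratic form yields $s_{k\to l} = v_l(v_l - v_k)$, and exchanging the roles of $k$ and $l$ produces $s_{l\to k} = v_k(v_k - v_l)$. Both expressions vanish as soon as $v_k = v_l$, which establishes the implication.

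There is essentially no obstacle here; the argument is little more than unpacking the action of the two rank-one matrices $\bm{L}_{kl}$ and $\bm{L}_{k\to l}$ on the Fiedler vector. The only mild point worth emphasising is that Lemma \ref{lem:EVGrowth} applies to non-symmetric perturbations as well: in the directed case the left and right eigenvectors appearing in the formula are still those of the unperturbed symmetric matrix $\bm{L_W}$, so they both equal $\bm{v}$ and no separate left-eigenvector computation is required.
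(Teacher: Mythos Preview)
Your proof is correct and follows exactly the same approach as the paper: apply Lemma \ref{lem:EVGrowth} with the Fiedler vector serving as both left and right eigenvector, then read off $s_{kl}=(v_k-v_l)^2$ and $s_{k\to l}=v_l(v_l-v_k)$ from the explicit form of the perturbation matrices. The paper's own argument is simply a terser version of what you wrote.
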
 

\begin{proof}
Applying Lemma \ref{lem:EVGrowth} together with Equation (\ref{PertMatrix}) we obtain for an undirected perturbation $s_{kl}=(v_k-v_l)^2$ and hence the desired equivalence. For a directed perturbation we have similarly $s_{k\rightarrow l}=v_l(v_l-v_k)$.
\end{proof}
%We remark that if we assume a generic choice of the non-zero weights of $\bm{W}$ we can achieve equivalence in $(ii)$.\\
In some cases we can relate the algebraic property of having two identical entries in the Fiedler vector to the topology of the underlying graph.

\begin{proposition}\label{30June2017}
Let $\mathcal{G}$ be a connected weighted graph with adjacency matrix $\bm{W}$. Assume that the nodes $k$ and $l$ have the same set of adjacent nodes, i.e. $W_{ki}=W_{li}$ for all $1\leq i\leq N$.  Furthermore, assume that we have $d_k>d_{\min}$ where $d_{\min}$ is the minimal degree in $\mathcal{G}$. Then we have $s_{kl}=s_{k\rightarrow l}=s_{l\rightarrow k}=0$.
\end{proposition}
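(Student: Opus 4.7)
The plan is to show that, under the hypotheses, the Fiedler vector $\bm{v}$ of $\bm{L_W}$ satisfies $v_k=v_l$; Lemma~\ref{lem:IdEvectors} then delivers $s_{kl}=0$ and simultaneously $s_{k\to l}=s_{l\to k}=0$.

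First I would exploit the twin symmetry carried by the hypothesis $W_{ki}=W_{li}$ for all $i$. Taking $i=l$ together with the absence of self-loops forces $W_{kl}=W_{lk}=0$, and summing over $i$ gives $d_k=d_l$; a direct computation then identifies $\bm{e}_k-\bm{e}_l$ as an eigenvector of $\bm{L_W}$ with eigenvalue $d_k$. More structurally, the permutation matrix $P$ swapping the $k$-th and $l$-th coordinates commutes with $\bm{L_W}$, so $\bm{L_W}$ preserves each eigenspace of $P$: the $-1$-eigenspace is exactly the line spanned by $\bm{e}_k-\bm{e}_l$, and the $+1$-eigenspace is the symmetric subspace $S=\{\bm{v}\in\mathbb{R}^{N}:v_k=v_l\}$. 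Since $\lambda_2(\bm{L_W})$ is simple by the standing assumption behind Definition~\ref{motion}, the Fiedler vector is either proportional to $\bm{e}_k-\bm{e}_l$ or else lies in $S$ with $v_k=v_l$.

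It then remains to rule out the antisymmetric possibility using $d_k>d_{\min}$. I would pick a vertex $m\neq k,l$ with $d_m=d_{\min}$; such a vertex exists because $d_k=d_l>d_{\min}$. The test vector $\bm{w}=\bm{e}_m-\tfrac{1}{N}\bm{1}$ automatically lies in $S\cap\bm{1}^{\perp}$ (its $k$-th and $l$-th entries both equal $-1/N$) and satisfies $\bm{w}^{T}\bm{L_W}\bm{w}=d_m$, $\|\bm{w}\|^{2}=(N-1)/N$. By Courant--Fischer this gives $\lambda_2(\bm{L_W})\leq\tfrac{N}{N-1}d_m$; if this bound is strictly less than $d_k$, the Fiedler eigenvalue cannot coincide with the antisymmetric eigenvalue $d_k$, so the Fiedler vector is forced into $S$ and the argument closes.

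The hard part is the strictness of this last estimate: $\tfrac{N}{N-1}d_m<d_k$ is not automatic from $d_m<d_k$ when the two degrees are close. To close the gap I would refine the test vector using the twin structure itself. Because $W_{kl}=0$, the quadratic form $\bm{w}^{T}\bm{L_W}\bm{w}$ pays no cost across the pair $\{k,l\}$; a competitor of the form $\bm{w}=\bm{e}_m-\tfrac{1}{2}(\bm{e}_k+\bm{e}_l)$, still lying in $S\cap\bm{1}^{\perp}$, tunes the contribution of the edges incident to the twin pair and lets one absorb the naive $1/(N-1)$ slack. An equivalent route is to pass to the quotient graph $\tilde{\mathcal{G}}$ obtained by identifying $k$ with $l$, whose minimum-degree vertex is still $m$, and to transfer a Fiedler-type bound on $\tilde{\lambda}_2$ back to $\bm{L_W}|_{S}$. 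Once $v_k=v_l$ is established, the directed identities $s_{k\to l}=s_{l\to k}=0$ follow immediately from Lemma~\ref{lem:IdEvectors}(ii).
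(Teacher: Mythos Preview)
Your overall strategy coincides with the paper's: both reduce to showing $v_k=v_l$ for the Fiedler vector and then invoke Lemma~\ref{lem:IdEvectors}. Your symmetry argument via the commuting transposition $P$ is a clean replacement for the paper's direct subtraction of the $k$-th and $l$-th eigenvalue equations, but both routes land at the same obstruction: one must show $\lambda_2<d_k$, i.e.\ that the antisymmetric eigenvector $\bm{e}_k-\bm{e}_l$ is not the Fiedler vector. Your first test vector $\bm{e}_m-\tfrac{1}{N}\bm{1}$ reproduces exactly Fiedler's bound $\lambda_2\le\tfrac{N}{N-1}d_{\min}$, which is also what the paper uses.

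The gap is in your ``hard part''. The refined competitor $\bm{w}=\bm{e}_m-\tfrac12(\bm{e}_k+\bm{e}_l)$ does \emph{not} absorb the slack: a direct computation (using $W_{kl}=0$ and $W_{mk}=W_{ml}$) gives
\[
\frac{\bm{w}^T\bm{L_W}\bm{w}}{\bm{w}^T\bm{w}}
=\frac{2}{3}\,d_m+\frac{4}{3}\,W_{mk}+\frac{1}{3}\,d_k,
\]
so this Rayleigh quotient lies below $d_k$ only when $d_m+2W_{mk}<d_k$, a condition not implied by $d_m<d_k$ and potentially \emph{stronger} than the naive $\tfrac{N}{N-1}d_m<d_k$. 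The quotient-graph route you sketch is just a reformulation of $\bm{L_W}|_S$ and runs into the same obstruction. The paper does not refine the test vector at all: it closes the strict inequality by invoking $d_k\ge d_{\min}+1$ together with $d_{\min}<N-1$, i.e.\ the integer degree gap of an unweighted graph. Under that reading your very first bound is already sufficient and no refinement is needed; your concern is legitimate only for genuinely weighted graphs, and there your proposed fixes do not close it.
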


\begin{proof}
By the previous Lemma we only have to show that the Fiedler vector $\bm{v}$ fulfils $v_k=v_l$. Now, $\bm{v}$ is given as a solution of the equation $\bm{L_Wv}=\lambda_2 \bm{v}$. The $k$-th equation of this system writes as $\lambda_2 v_k = d_kv_k-\sum_{i\neq k}W_{ki}v_i$.
By assumption we have $d_k=d_l=:d$. Subtracting the $k$-th and the $l$-th component of the eigenvector equation we obtain
\begin{eqnarray*}
\lambda_2(v_k-v_l)&=&d_kv_k-d_lv_l-\left(\sum_{i\neq k}W_{ki}v_i-\sum_{i\neq l}W_{li}v_i\right)\\
\iff \lambda_2(v_k-v_l)&=&d(v_k-v_l)-(W_{kl}v_l-W_{lk}v_k)\\
\iff 0&=&(d-\lambda_2+W_{kl})(v_k-v_l).
\end{eqnarray*} 
Furthermore
\begin{eqnarray*}
d-\lambda_2+W_{kl}&\geq& d-\frac{N}{N-1}d_{\min}+W_{kl}\\
&\geq& d_{\min}+1-\frac{N}{N-1}d_{\min}+W_{kl}\\
&=& 1-\frac{1}{N-1}d_{\min}+W_{kl}\\
&>&W_{kl}\geq 0,
\end{eqnarray*}
where the before last inequality holds as we have $d_{\min}<d\leq N-1$. Hence, we must have $v_k=v_l$.
\end{proof}

\begin{OBS}
 (i) As an example consider the graph in Figure \ref{Fig_DissecGraphs}. Here, adding a weak link between nodes $5$ and $7$ leaves the spectral gap invariant up to first order, i.e.  $s_{57}=s_{7\rightarrow 5}=s_{5\rightarrow 7}=0$.\\
(ii) To see that the condition $d_k>d_{\min}$ is necessary, consider the weighted graph on $5$ nodes given by the following adjacency matrix
\begin{equation*}
\smaller{
\bm{W}=\left(\begin{array}{ccccc}
0 & 1 & 2 & 0 & 1\\
1 &0 &0 &1 &0 \\
2 &0 &0 &1.5 &0 \\
0 &1 &1.5 &0 &1 \\
1 & 0& 0& 1& 0\\
\end{array}\right).}
\end{equation*}
Nodes $2$ and $5$ have the same set of connections. However, the spectral gap $\lambda_2=2$ is simple and the Fiedler vector is given by $\bm{v}=(0,-1,0,0,1)^T$. So $v_2\neq v_5$ although both nodes have the same connections. And indeed, $d_2=d_5=2=d_{\min}$.\\
(iii) The proof of Proposition \ref{lem:IdEvectors} shows that when two nodes share the same connections, the corresponding entries of the Fiedler vector are equal. However, the opposite is in general not true. Consider the graph on $5$ nodes given by the adjacency matrix 
\begin{equation*}
\smaller{
\bm{W}=\left(\begin{array}{ccccc}
0 & 1 & 0 & 0 & 0\\
1 &0 &1 &1 &1 \\
0 &1 &0 &0 &1 \\
0 &1 &0 &0 &1 \\
0 & 1& 1& 1& 0\\
\end{array}\right).}
\end{equation*}
The spectral gap $\lambda_2=1$ is simple and the Fiedler vector is given by $\bm{v}=(-3,0,1,1,1)^T$. Hence, node $4$ and node $5$ have the same entry in the Fiedler vector although they don't have the same connections.
\end{OBS}

\section{Synchronizability in Directed Networks: Proof of Theorem \ref{thm:CutsetPerturbation}}\label{sec:proof_cutset} 

To prove Theorem \ref{thm:CutsetPerturbation} our aim is again to apply Lemma \ref{lem:EVGrowth} in order to track the motion of the real part of the spectral gap. In order to do so we first investigate the structure of the eigenvectors of $\bm{L_W}$ in the following two auxilliary lemmata. First observe that the matrix $\bm{L}_{2}+\bm{D_{C}}$ is nonnegative diagonally dominant \cite{Berman1994,Horn1985}. This property enables us to find a Perron-Frobenius like result.

\begin{lemma}\label{lem:PositivOfEvectors}
Let $\bm{L_W}$ be as in Theorem \ref{thm:CutsetPerturbation}. Then, $\bm{L}_{2}+\bm{D_{C}}$ has a minimal simple, real and positive eigenvalue with corresponding positive left and right eigenvectors.
\end{lemma}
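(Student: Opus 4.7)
The plan is to recognise $\bm{L}_{2}+\bm{D}_{\bm{C}}$ as an irreducible nonsingular M-matrix and invoke Perron--Frobenius theory together with the standard spectral characterisation of such matrices.

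First I would observe that the off-diagonal entries of $\bm{L}_{2}+\bm{D}_{\bm{C}}$ coincide with those of $\bm{L}_{2}$ (since $\bm{D}_{\bm{C}}$ is diagonal), and are therefore nonpositive, so $\bm{L}_{2}+\bm{D}_{\bm{C}}$ is a Z-matrix. Choosing $s>0$ larger than any diagonal entry, the matrix $\bm{B}:=s\bm{I}-(\bm{L}_{2}+\bm{D}_{\bm{C}})$ is entrywise nonnegative, and its off-diagonal sign pattern is exactly that of the adjacency matrix of the second strong component. Since that component is strongly connected, $\bm{B}$ is irreducible. By the Perron--Frobenius theorem, the spectral radius $\rho(\bm{B})$ is a simple real eigenvalue of $\bm{B}$ with strictly positive left and right eigenvectors $\bm{u}$ and $\bm{v}$.

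Next I would translate back: $\lambda:=s-\rho(\bm{B})$ is a simple real eigenvalue of $\bm{L}_{2}+\bm{D}_{\bm{C}}$ with the same positive eigenvectors $\bm{u}$ and $\bm{v}$. Any other eigenvalue $\mu$ of $\bm{L}_{2}+\bm{D}_{\bm{C}}$ corresponds to an eigenvalue $s-\mu$ of $\bm{B}$ with $|s-\mu|\leq\rho(\bm{B})$, hence $\Re(\mu)\geq\lambda$, so $\lambda$ achieves the minimal real part in the spectrum.

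It remains to verify $\lambda>0$, i.e.\ to show that $\bm{L}_{2}+\bm{D}_{\bm{C}}$ is a nonsingular M-matrix. Since $\bm{L}_{2}$ is a Laplacian we have $\bm{L}_{2}\bm{1}=\bm{0}$, hence $(\bm{L}_{2}+\bm{D}_{\bm{C}})\bm{1}=\bm{D}_{\bm{C}}\bm{1}$, which is nonnegative and nonzero: Assumption \textbf{A1} forces the cutset $\bm{C}$ to contain at least one nonzero entry, otherwise the two strong components would be decoupled and no spanning diverging tree would exist. Thus $\bm{L}_{2}+\bm{D}_{\bm{C}}$ is weakly diagonally dominant everywhere and strictly dominant in at least one row; combined with irreducibility this rules out singularity (a standard fact for irreducible diagonally dominant matrices). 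All eigenvalues of a nonsingular M-matrix have positive real parts, so $\lambda>0$. The only mildly delicate point is confirming that $\lambda$ is the eigenvalue of smallest real part rather than of smallest modulus, which is precisely what the shift argument above provides; the remaining ingredients are direct applications of Perron--Frobenius and of standard M-matrix theory (Berman--Plemmons, Horn--Johnson).
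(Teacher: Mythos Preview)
Your proof is correct and follows essentially the same route as the paper: shift $\bm{L}_2+\bm{D}_{\bm{C}}$ by a large enough multiple of the identity to obtain a nonnegative irreducible matrix, apply Perron--Frobenius, translate back, and then use irreducible (weak) diagonal dominance to conclude invertibility and hence strict positivity of the minimal eigenvalue. The only cosmetic difference is that you phrase the last step in the language of nonsingular M-matrices while the paper cites Gershgorin together with Corollary~6.2.9 in Horn--Johnson, but these are the same argument.
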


\begin{proof}
Let $s:=\max_{i}\left\{{\bm{D_{C}}} _{(i)}+\sum_{j\neq i}W_{ij}\right\} >0$, then
$\bm{N}=s\bm{I}-\left(\bm{L}_{2}+\bm{D_{C}}\right)$ is a nonnegative
matrix by definition of $s$. Furthermore it is irreducible as we
assumed that the component associated to $\bm{L}_{2}$ is strongly
connected. Then, by the Perron-Frobenius theorem \cite{Berman1994},
$\bm{N}$ has a maximal, simple and real eigenvalue $\Lambda$ with corresponding
positive left and right eigenvectors $\bm{\omega}$ and $\bm{\eta}$.
That is 
\begin{eqnarray*}
\bm{N\eta} & = & \Lambda\bm{\eta}
\end{eqnarray*}
yielding
\begin{eqnarray*}
\left(\bm{L}_{2}+\bm{D_{C}}\right)\bm{\eta} & = & \left(s-\Lambda\right)\bm{\eta}.
\end{eqnarray*}

As $\Lambda$ is the maximal eigenvalue and all the eigenvalues of $\bm{L}_{2}+\bm{D_{C}}$ are obtained by eigenvalues $\mu$ of $\bm{N}$
through $s-\mu$, we must have that $s-\Lambda$ is the minimal real
eigenvalue of $\bm{L}_{2}+\bm{D_{C}}$. Furthermore, the eigenvectors
are the same, so the left and right eigenvectors of $\bm{L}_{2}+\bm{D_{C}}$ corresponding to
$s-\Lambda$ are positive. As a consequence of the Gershgorin Theorem together with the strong connectivity of the second component, we have that $\bm{L}_{2}+\bm{D_{C}}$ is invertible (Corollary 6.2.9 in \cite{Horn1985}). Hence, $s-\Lambda\neq 0$. Furthermore, by the Gershgorin Theorem again, we have $s-\Lambda\geq 0$ and hence $s-\Lambda >0$.
\end{proof}

This Lemma shows that the spectral gap and the corresponding eigenvectors are real in this case. So when changing the coupling structure, the motion of $\lambda_{2}$ will be along the real axis by Lemma \ref{lem:EVGrowth}. Next, we investigate the structure of the eigenvectors of $\bm{L_W}$.

\begin{lemma}
\label{lem:ReducedEvectors}Let $\bm{L_W}$ be as in Theorem \ref{thm:CutsetPerturbation} and let the spectral gap $\lambda_{2}$ of $\bm{L_W}$ be an eigenvalue of $\bm{L_{2}}+\bm{D_{C}}$. Then, the eigenvalue is simple and the corresponding left and right eigenvectors of $\bm{L_W}$ have the form
\[
\left(\bm{w}\bm{C}\left(\bm{L}_{1}-\lambda_{2}\bm{I}\right)^{-1},\bm{w}\right)\quad,\quad\left(\bm{0},\bm{y}\right)
\]
where $\bm{w}$ and $\bm{y}$ are left and right eigenvectors of $\bm{L}_{2}+\bm{D_{C}}$.
\end{lemma}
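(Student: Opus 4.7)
The plan is to exploit the block lower-triangular structure of $\bm{L_W}$, together with Lemma \ref{lem:PositivOfEvectors}, to first establish simplicity of $\lambda_2$ and then explicitly solve the right and left eigenvector equations. Because $\bm{L_W}$ is block lower-triangular, its spectrum is the union of the spectra of its diagonal blocks, i.e.\ $\mathrm{spec}(\bm{L_W})=\mathrm{spec}(\bm{L}_1)\cup\mathrm{spec}(\bm{L}_2+\bm{D_C})$. Assumption \textbf{A1} forces $\bm{L}_1$ to have $0$ as a simple eigenvalue and all of its other eigenvalues to have strictly positive real part, while Lemma \ref{lem:PositivOfEvectors} tells us that the eigenvalue of $\bm{L}_2+\bm{D_C}$ of smallest real part is real, positive, and simple. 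Under the structural genericity invoked in the statement, the spectra of $\bm{L}_1$ and $\bm{L}_2+\bm{D_C}$ are disjoint.

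From this I would argue simplicity as follows. Since $\lambda_2$ is, by definition, the eigenvalue of $\bm{L_W}$ of smallest positive real part, and since the eigenvalues of $\bm{L_W}$ coming from $\bm{L}_2+\bm{D_C}$ all have real part at least the minimal one provided by Lemma \ref{lem:PositivOfEvectors}, the hypothesis that $\lambda_2$ belongs to $\mathrm{spec}(\bm{L}_2+\bm{D_C})$ forces $\lambda_2$ to \emph{be} that minimal eigenvalue. Hence $\lambda_2$ is simple as an eigenvalue of $\bm{L}_2+\bm{D_C}$, and by the genericity statement above it is not an eigenvalue of $\bm{L}_1$; the two facts together give that $\lambda_2$ is a simple eigenvalue of $\bm{L_W}$ and that $\bm{L}_1-\lambda_2\bm{I}$ is invertible.

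With invertibility in hand, the eigenvector statements become elementary block calculations. For a right eigenvector $(\bm{u},\bm{v})^T$ the equation $\bm{L_W}(\bm{u},\bm{v})^T=\lambda_2(\bm{u},\bm{v})^T$ gives $\bm{L}_1\bm{u}=\lambda_2\bm{u}$ and $-\bm{C}\bm{u}+(\bm{L}_2+\bm{D_C})\bm{v}=\lambda_2\bm{v}$; the first identity together with $\lambda_2\notin\mathrm{spec}(\bm{L}_1)$ yields $\bm{u}=\bm{0}$, and then the second identity says that $\bm{v}=\bm{y}$ is a right eigenvector of $\bm{L}_2+\bm{D_C}$ for $\lambda_2$. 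For a left eigenvector $(\bm{p},\bm{q})$ the equation $(\bm{p},\bm{q})\bm{L_W}=\lambda_2(\bm{p},\bm{q})$ gives $\bm{q}(\bm{L}_2+\bm{D_C})=\lambda_2\bm{q}$ and $\bm{p}\bm{L}_1-\bm{q}\bm{C}=\lambda_2\bm{p}$; the first identity identifies $\bm{q}=\bm{w}$ with a left eigenvector of $\bm{L}_2+\bm{D_C}$, and the second, using invertibility of $\bm{L}_1-\lambda_2\bm{I}$, is solved uniquely as $\bm{p}=\bm{w}\bm{C}(\bm{L}_1-\lambda_2\bm{I})^{-1}$, matching the claim.

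The only nontrivial point is the step where genericity is used to guarantee that $\lambda_2\notin\mathrm{spec}(\bm{L}_1)$, since without this the whole argument collapses: otherwise $\bm{u}$ need not vanish and the system for $\bm{p}$ is no longer uniquely solvable. I expect this invocation of the structurally generic disjointness of the spectra of $\bm{L}_1$ and $\bm{L}_2+\bm{D_C}$, as in Theorems 3.1 and 6.6 of \cite{Poignard2017}, to be the key technical input; the remainder of the proof is a direct verification via the block decomposition.
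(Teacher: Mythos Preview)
Your proposal is correct and follows essentially the same route as the paper: exploit the block lower-triangular structure, invoke Lemma \ref{lem:PositivOfEvectors} to identify $\lambda_2$ as the simple minimal eigenvalue of $\bm{L}_2+\bm{D_C}$, use the genericity standing assumption of Theorem \ref{thm:CutsetPerturbation} to ensure $\lambda_2\notin\mathrm{spec}(\bm{L}_1)$, and then solve the block eigenvector equations. If anything, you are more explicit than the paper about why $\bm{L}_1-\lambda_2\bm{I}$ is invertible; the paper compresses this into the single clause ``As $\lambda_2$ is simple by Lemma \ref{lem:PositivOfEvectors}, it is not an eigenvalue of $\bm{L}_1$'', which tacitly relies on the same genericity input you spell out.
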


\begin{proof}
Let $\left(\bm{v},\bm{w}\right)$ and $\left(\bm{x},\bm{y}\right)$ be left and right eigenvectors of $\bm{L_W}$ corresponding to $\lambda_{2}$. For the left eigenvector we have 
\begin{eqnarray*}
\bm{0} & = & \left(\bm{v},\bm{w}\right)\bm{L_W}-\lambda_2\left(\bm{v},\bm{w}\right)\\
 & = & \left(\bm{v}\left(\bm{L}_{1}-\lambda_{2}\bm{I}\right)-\bm{wC},\bm{w}\left(\bm{L}_{2}+\bm{D_{C}}\right)-\lambda_2\bm{w}\right).
\end{eqnarray*}
The second component of this equation yields that $\bm{w}$ is a left eigenvector of $\left(\bm{L}_{2}+\bm{D_{C}}\right)$. As $\lambda_{2}$ is simple by Lemma \ref{lem:PositivOfEvectors}, it is not an eigenvalue of $\bm{L}_{1}$, so the first component yields 
\[
\bm{v}=\bm{wC}\left(\bm{L}_{1}-\lambda_{2}\bm{I}\right)^{-1}.
\]
The equation for the right eigenvector is
\begin{eqnarray*}
\bm{0} & = & \bm{L_W}\left(\begin{array}{c}
\bm{x}\\
\bm{y}
\end{array}\right)-\lambda_{2}\left(\begin{array}{c}
\bm{x}\\
\bm{y}
\end{array}\right)\\
 & = & \left(\begin{array}{c}
\left(\bm{L}_{1}-\lambda_{2}\bm{I}\right)\bm{x}\\
-\bm{C}\bm{x}+\left(\bm{L}_{2}+\bm{D_{C}}\right)\bm{y}-\lambda_{2}\bm{y}
\end{array}\right).
\end{eqnarray*}
As $\bm{L}_{1}-\lambda_{2}\bm{I}$ is regular we have $\bm{x}=\bm{0}$.
The second component then yields that $\bm{y}$ is a right eigenvector
of $\bm{L}_{2}+\bm{D_{C}}$.
\end{proof}

\begin{proof} [Proof of Theorem \ref{thm:CutsetPerturbation}]
 We will use throughout the proof that the smallest eigenvalue of  $\bm{L}_2+\bm{D_C}$ is simple, real and positive by  Lemma \ref{lem:PositivOfEvectors}.\\
Ad (i). 
Let the nonnegative matrix $\varepsilon \bm{\Delta}$ be a small perturbation of the cutset, so the corresponding Laplacian writes
$$\bm{L}_p\left(\varepsilon \bm{\Delta}\right)=\left(\begin{array}{cc}
\bm{L}_{1} & \bm{0}\\
-\bm{C}-\varepsilon\bm{\Delta} & \bm{L}_{2}+\bm{D}_{\bm{C}}+\varepsilon \bm{D}_{\bm{\Delta}}
\end{array}\right).$$
By assumption $\lambda_2$ is an eigenvalue of $\bm{L}_1$. As the smallest eigenvalue of $\bm{L}_2+\bm{D_C}$ is simple we can apply Lemma \ref{lem:EVGrowth} and obtain the following formula for the perturbed smallest eigenvalue $\mu_1(\varepsilon)$ of $\bm{L}_2+\bm{D_C}+\varepsilon\bm{D_{\Delta}}$ 
\[
\mu_1^{\prime}(0) = \frac{\bm{w}^T\bm{D_{\Delta}y}}{\bm{w}^T\bm{y}}>0.
\]
The positivity holds true because by Lemma  \ref{lem:PositivOfEvectors} left and right eigenvectors $\bm{w}^T, \bm{y}$ of $\bm{L}_2+\bm{D_C}$ are positive and at least one entry of $\bm{D_{\Delta}}$ is positive. So we have $\Re(\lambda_2)<\mu_1(0)<\mu_1(\varepsilon)$, i.e. the spectral gap of the whole network is still given by $\lambda_2$. Now, the perturbed matrix $\bm{L}_2+\bm{D_C}+\varepsilon\bm{D_{\Delta}}$ is of the same form as $\bm{L}_2+\bm{D_C}$. Hence,  the above reasoning can be applied repeatedly in order to obtain the desired result.\\ 
\quad \\
Ad (ii). Let again $\varepsilon\bm{\Delta}$ be a small perturbation in direction of the cutset and let us write the perturbed Laplacian $\bm{L_p}\left(\varepsilon\bm{\Delta} \right)$ as
%so the perturbed Laplacian $\bm{L_p}$ can be written as
\begin{eqnarray*}
\bm{L_p}\left(\varepsilon\bm{\Delta}\right) & = & \bm{L_W}+\varepsilon\left(\begin{array}{cc}
\bm{0} & \bm{0}\\
-\bm{\Delta} & \bm{D}_{\bm{\Delta}}
\end{array}\right).
\end{eqnarray*}
Using Lemma \ref{lem:EVGrowth} we have for the spectral gap of the perturbed system
\[
s(\bm{\Delta}) =\frac{\bm{w}^T\left(\bm{D_{\Delta}}\bm{y}-\bm{\Delta}\bm{x}\right)}{\left(\bm{v}^T,\bm{w}^T\right)\left(\begin{array}{c}
\bm{x}\\
\bm{y}
\end{array}\right)}
\]
where $\left(\bm{v},\bm{w}\right)$ and $\left(\bm{x},\bm{y}\right)$ are the eigenvectors of $\bm{L_W}$. Now, from Lemma \ref{lem:ReducedEvectors} we have $\bm{x}=\bm{0}$ and so we obtain
\[
s(\bm{\Delta})=\frac{\bm{w}^T\bm{D_{\Delta}}\bm{y}}{\bm{w}^T\bm{y}}.
\]
By assumption $\bm{\Delta}$ and therefore $\bm{D_{\Delta}}$ is nonnegative.
Furthermore, Lemma \ref{lem:PositivOfEvectors} shows that $\bm{w}$
and $\bm{y}$ are positive, so $s(\bm{\Delta})$ is positive. By the same reasoning as in (i) we can perform such small perturbations repeatedly in order to obtain the result for any structural perturbation in direction of the cutset with arbitrarily large entries.\\

Ad (iii){\rm (a)}. For a small perturbation $\varepsilon\bm{\Delta}$ in opposite direction of the cutset the perturbed Laplacian writes as 
\[
\bm{L}_{p}\left(\varepsilon \bm{\Delta}\right)=\left(\begin{array}{cc}
\bm{L}_{1}+\bm{D}_{\varepsilon\Delta} & -\varepsilon\bm{\Delta}\\
-\bm{C} & \bm{L}_{2}+\bm{D}_{C}
\end{array}\right).
\]
Using Lemma \ref{lem:EVGrowth} and \ref{lem:ReducedEvectors} yields 
\begin{equation}
s(\bm{\Delta})=-\frac{ \bm{w}^T \bm{M (\Delta)y} }{\bm{w}^T \bm{y} },
\label{EigenvPert2}
\end{equation}
where $\bm{w}$ and $\bm{y}$ are left and right eigenvectors of  $\bm{L}_{2}+\bm{D_C}$ and 
\begin{equation}
\bm{M (\Delta)}=\bm{C}(\bm{L}_{1}-\lambda_{2}\bm{I})^{-1}\bm{\Delta}.
\label{OppositeOperator}
\end{equation}
Now, assume we can find a $\bm{\Delta}$ such that $\bm{\Delta y}=\bm{1}$. Then we would have
\begin{eqnarray*}
\bm{M (\Delta) y} &=& \bm{C}(\bm{L}_{1}-\lambda_{2}\bm{I})^{-1}\bm{1}\\
&=&-\frac{1}{\lambda_2}\bm{C}\bm{1}.
\end{eqnarray*}
Now, by Lemma \ref{lem:PositivOfEvectors}  the eigenvectors $\bm{w}$ and $\bm{y}$  are postitive and $\bm{C}$ is nonnegative with at least one positive entry. Hence
\begin{eqnarray*}
s(\bm{\Delta}) &=&\frac{1}{\lambda_2}\frac{\bm{w}^T\bm{C1}}{\bm{w}^T\bm{y}}\\
&>&0.
\end{eqnarray*}

So it remains to show that there exists a $\bm{\Delta}$ such that $\bm{\Delta y}=\bm{1}$. By Lemma \ref{lem:PositivOfEvectors}, $\bm{y}$ is a positive vector, so for any fixed  $1\leq k\leq m $ we can choose $\bm{\Delta}_{ik}=\frac{1}{y_k}$ for $1\leq i \leq N$  and zero elsewhere to obtain $\bm{\Delta y}=\bm{1}$.\\

Ad (iii){\rm (b)}. Here we first use a result proved in \cite{Poignard2017} on the structure of Laplacian spectra in the case of strongly connected digraphs. Such graphs admit a spanning diverging tree and therefore, by Theorem 6.6 in \cite{Poignard2017}, we have that for a generic choice of the nonzero weights of $\bm{L}_1$, the spectrum of this matrix is simple. Under this genericity assumption, we can thus suppose that $\bm{L}_1$ is diagonalizable.

Then let's consider a vector $\bm{\Delta y}$ (with nonnegative entries) decomposed in the basis of eigenvectors $\left(\bm{1},\bm{X}_2,\cdots,\bm{X}_N\right)$ of $\bm{L}_1$:
\begin{align}\label{MSone}
\bm{\Delta y}=\beta_1\bm{1}+ \sum_{k=2}^N \beta_k \bm{X}_k,
\end{align}
with the numbers $\beta_i$ being possibly in $\mathbb{C}$. Such a decomposition gives the relation:
\begin{align}\label{MStwo}
-\left(\bm{L}_1-\lambda_2 \bm{I}\right)^{-1}\bm{\Delta y}=\dfrac{\beta_1}{\lambda_2}\bm{1}-\sum_{k=2}^N \dfrac{\beta_k}{\alpha_k-\lambda_2}\bm{X}_k,
\end{align}
where the numbers $\alpha_k$ denote the eigenvalues of $\bm{L}_1$ sorted by increasing order with respect to their real part, (so that $\alpha_1=0$).
Notice the fraction in this expression is well defined, since by assumption we have $\Re\left(\alpha_k\right)>\lambda_2$ for any 
$k\geq 2$.

%Now, assume $\beta_1 \in \mathbb{C}\setminus \mathbb{R}$. We remark that the terms $\beta_k,\alpha_k,\bm{X}_k$  do not depend on $\lambda_2$. So choosing $\lambda_2>0$ small enough such that the imaginary part of the term $\dfrac{\beta_1}{\lambda_2}\bm{1}$ dominates the right hand side of Equation \eqref{MStwo}, we get that the term on the left in the relation \eqref{MStwo} must be complex. This is absurd since $\lambda_2$ is real by Lemma \ref{lem:PositivOfEvectors}.
%Therefore we have  $\beta_1 \in \mathbb{R}$ for any vector $\bm{\Delta y}$ decomposed as in \eqref{MSone}.

Let's consider $\bm{\Delta y}=\bm{e}_k$, where $\bm{e}_k$ denotes the $k$-th vector of the canonical basis of ${\mathbb{R}}^N$. 
We first remark that the corresponding values $\beta_1\left(\bm{e}_k\right)$ in the decomposition \eqref{MSone} satisfy the following relation 
\begin{align*}
\bm{1}=1\cdot \bm{1}= \sum_{k=1}^N\beta_1\left(\bm{e}_k\right) \bm{1}+\sum_{k=1}^N\sum_{j=2}^N\beta_j(\bm{e}_k)\bm{X}_j
\end{align*}
which directly gives us the relation $\sum_{k=1}^N\beta_1\left(\bm{e}_k\right)=1$.

So at least one of these values, say $\beta_1\left(\bm{e}_{k_0}\right)$, must be positive. Consequently, for any nonnegative matrix $\bm{\Delta}$ with $\bm{\Delta y}=\bm{e}_{k_0}$ and $\lambda_2$ small enough we get that the right hand side in Equation \eqref{MStwo} is positive and hence, $s(\bm{\Delta})$ from Equation \eqref{EigenvPert2} must be positive.
In other words, since the terms in the sum in Equation \eqref{MStwo} depend only on $\bm{L}_1$ and $\bm{e}_{k_0}=\bm{\Delta y}$, there exists a constant $\delta(\bm{L}_1,\bm{e}_{k_0})$ such that if $0<\lambda_2<\delta(\bm{L}_1,\bm{e}_{k_0})$ then $s(\bm{\Delta})>0$.
To conclude it suffices to consider the set $\mathcal{A}$ of integers $1\leq k\leq N$ such that $\beta_1\left(\bm{e}_k\right)>0$ and to set 
\begin{align*}
\delta(\bm{L}_1)=\min_{k \in \mathcal{A}}\delta(\bm{L}_1,\bm{e}_{k}).
\end{align*}

Since $\mathcal{A}$ contains $k_0$ it is nonempty and thus $\delta(\bm{L}_1)$ exists and is positive. Now consider the structural perturbations $\bm{\Delta}$ with one link in opposite direction of the cutset for which exist an integer $k$ in $\mathcal{A}$ such that $\bm{\Delta y}=\bm{e}_k$, i.e the structural perturbations with only one link in opposite direction of the cutset ending at a node $k$ belonging to $\mathcal{A}$:
such structural perturbations exist (since the vector $\bm{y}$ is positive), and for any such $\bm{\Delta}$ we have $s(\bm{\Delta})>0$ provided $0<\lambda_2<\delta(\bm{L}_1)$.\\

Ad (iii)\rm{(c)}. As in {(b)} assume again that $\bm{L}_1$ is diagonalizable. If moreover it is zero-column sum, then the basis of eigenvectors $\left(\bm{1},\bm{X}_2,\cdots,\bm{X}_n\right)$ of $\bm{L}_1$ satisfies

\begin{align*}
\forall k\geq 2,\,\,\sum_{i=1}^n {X_k}_{(i)}=0.
\end{align*}
Indeed, this can be directly seen by mutltiplying each eigenvector equation $\bm{L}_1\bm{X}_k=\alpha_k\bm{X}_k$ by the vector $(1,\cdots,1)$ to the left.\\
As a result we must have $\beta_1 \left(\bm{e}_k\right)=\frac{1}{n}$ for any $1\leq k \leq n$.
Now consider any nonnegative matrix $\bm{\Delta}$: if $y_i$ denotes the i-th entry of $\bm{y}$
in its decomposition in the canonical basis $(\bm{e}_1,\cdots,\bm{e}_n)$, we have
\begin{align*}
-\left(\bm{L}_1-\lambda_2 \bm{I}\right)^{-1}\bm{\Delta y}&=\sum_{i=1}^ny_i\left[\dfrac{\beta_1(\bm{e}_i)}{\lambda_2}\bm{1}-\sum_{k=2}^n \dfrac{\beta_k(\bm{e}_i)}{\alpha_k-\lambda_2}\bm{X}_k\right]\\
&=\sum_{i=1}^ny_i\left[\dfrac{1}{n\,\lambda_2}\bm{1}-\sum_{k=2}^n \dfrac{\beta_k(\bm{e}_i)}{\alpha_k-\lambda_2}\bm{X}_k\right].
\end{align*}
Since all the entries $y_i$ are positive by Lemma \ref{lem:PositivOfEvectors}, thus to get $s(\bm{\Delta})>0$ it suffices that all the terms in brackets are positive vectors. So it suffices that $\lambda_2$ is small enough compared to $\frac{1}{n}$ and compared to the sums $\sum_{k=2}^n \frac{\beta_k(\bm{e}_i)}{\alpha_k-\lambda_2}\bm{X}_k$. Since the family $(\beta_k(\bm{e}_i))_{\substack{2\leq k\leq n\\
1\leq i\leq n\\}}$ is finite, we get again (as in Ad (iii)\rm{(b)}) the existence of a constant $\delta(\bm{L}_1)>0$ such that, if $0<\lambda_2<\delta(\bm{L}_1)$ then for any structural perturbation $\bm{\Delta}$ in opposite direction of the cutset, we have $s(\bm{\Delta})>0$.\\

Ad (iv).  As in {(b)}  and {(c)} we can suppose $\bm{L}_1$ is diagonalizable. %\textcolor{red}{As for item (iii), we have that for any cutset $\bm{C}$ with small enough entries, the eigenvalue $\lambda_2$ belongs to $L_2+D_{C}}
Since the entries of the cutset $\bm{C}$ are nonnegative, in virtue of Eq. (\ref{EigenvPert2}) it suffices to show that there is a $\bm{\Delta}$ such that some entry of $(\bm{L}_1 - \lambda_2 \bm{I} )^{-1} \bm{\Delta} \bm{y}$ is positive.
Assume $\bm{L}_1$ admits a positive eigenvalue $\alpha_k$: then any eigenvector of $\bm{L}_1$ associated to $\alpha_k$ is real. 
Let's choose one such eigenvector $\bm{X}_k$: if one entry of $\bm{X}_k$ is negative, we define 
\[
\mathcal{G}_{-}=\{1\leq i\leq n: {\bm{X}_k}_{(i)}<0\} \mbox{~~ and ~~} \mathcal{G}_{+}=\{1\leq i\leq n: {\bm{X}_k}_{(i)}>0\}
\]
and consider  
\begin{align*}
\beta_m=\max \{-{\bm{X}_k}_{(i)}, i \in \mathcal{G}_{-} \}  
\mbox{~~ and ~~ } \beta_M = \max\{ \bm{X}_{k_{(i)}}, i \in \{ 1, \dots, N\}  \} , 
\end{align*}
if the set $\mathcal{G}_{-}$ is empty (resp. $\mathcal{G}_{+}$) we set $\beta_m = 0$ (resp. $\beta_M = 0$). %Notice that in view of the genericity properties, the eigenvector $\bm{X}_k$ has no zero entry.
Moreover, we consider $\bm{\Delta y} = \beta_m \bm{1}  + \bm{X}_k$. In this way $\bm{\Delta y}$ is nonnegative and there is a $\bm{\Delta}$ that solves this equation. Thus
\begin{align*}
(\bm{L}_1 - \lambda_2 \bm{I})^{-1} \bm{\Delta}\bm{y} = \frac{1}{\lambda_2}\left[ - \beta_m \bm{1} + \frac{1}{\frac{\alpha_k}{\lambda_2} - 1} \bm{X}_k\right].
\end{align*}
%
%{\color{blue} 
Assume that $\beta_M$ is attained in the $i$th entry, so we obtain
that if 
$$
0<\alpha_k< \left( \frac{\beta_M}{\beta_m} +1 \right)\lambda_2,
$$ 
then  ${(\bm{L}_1 - \lambda_2 \bm{I})^{-1} \bm{\Delta y}}_{(i)}>0.$
Therefore, for a cutset $\bm{C}$ connecting to this entry, we have $s(\bm{\Delta})\leq0$, as desired.\\ 
If all entries of $\bm{X}_k$ are nonnegative, then $\bm{\Delta y} = \bm{X}_k$. % with $\beta=\max \{{X_k}_{(i)},1\leq i\leq n\}$.
This yields
\begin{align*}
(\bm{L}_1 - \lambda_2 \bm{I})^{-1} \bm{\Delta}\bm{y} = \left[ \frac{1}{\alpha_k - \lambda_2} \bm{X}_k\right],
\end{align*}
and any $\alpha_k > \lambda_2$ suffices  from which we get this time that for any choice of the cutset $\bm{C}$ we have $s(\bm{\Delta})<0$. 
\end{proof}

In Item (iv) of this theorem, the choice of the cutset $\bm{C}$ for which we hinder synchronization is not that sharp: indeed suppose only one entry in $\bm{X}_k$ is negative. Then, we can apply the same reasoning to the vector $-\bm{X}_k$, for which $n-1$ entries will be non negative. In this case the suitable cutsets $\bm{C}$ will be more numerous.

{\bf Illustration of Item (iv) (Hinderance of Synchronization).} Consider the directed network in Figure \ref{Fig_example1} (without the addition of links). Assume that all connections in the master network have strength $1/2< w<1$ and the connections in the slave network have strength $1$. Then the spectrum of the network  can be decomposed as 
$$
\sigma (L_W) = \{ 0, 2w, 3w \} \cup \{ 1, 3 \}.
$$
So the spectral gap $\lambda_2 = 1$ belongs to $\sigma(\bm{L}_2 + \bm{D}_{\bm{C}})$.
With the notation from the proof above we have $\alpha_2 = 2w$ and the corresponding eigenvector of $\bm{L}_1$ is given by $\bm{X}_2 = (-1 , 1 , -1)$. Also, the right eigenvector corresponding to $\lambda_2$ of  $\bm{L}_2 + \bm{D}_{\bm{C}}$ is $\bm{y} = (1, 1)$. Hence, considering 
$$
\bm{\Delta y} = \beta_m  \bm{1} + \bm{X}_2 = (0, 2, 0)
$$
this equation can be solved by introducing a single link from any node of the slave component to any 
node of the master component. However, in view of the cutset $\bm{C}$ that starts from node $2$ of the master component, only 
connections to node $2$ can give a contribution to $s(\bm{\Delta})$. Hence, we can choose
$$
\bm{\Delta} = 
\left( 
\begin{array}{cc}
0 & 0 \\
2 & 0 \\
0 & 0  
\end{array}
\right)
$$ 
that is, a single link from node $4$ to node $2$ as in the Figure \ref{Fig_example1} will cause hinderance of synchronization since $s(\bm{\Delta}) < 0$.

More generally, the eigenvector $\bm{X}_k$ provides a spectral decomposition
$$
\mathcal{G}_{-}=\{1\leq i\leq n: {\bm{X}_k}_{(i)}<0\} \mbox{~~ and ~~}
\mathcal{G}_{+}=\{1\leq i\leq n: {\bm{X}_k}_{(i)}>0\}.
$$
When the cutset is from only the nodes of a single component $\mathcal{G}_{-}$ or $\mathcal{G}_{+}$ then it might be possible to hinder synchronization. This suggests that to improve synchrony it is best to drive the slave by mixing multiple inputs from $\mathcal{G}_{-}$ and $\mathcal{G}_{+}$.

\section{Discussion}\label{sec:discussion}

In this paper we have investigated the effect of structural perturbations on the transverse stability of the synchronous manifold in diffusively coupled networks. Establishing a connection between topological properties of a network and its synchronizability has been a challenge for the last few decades. So far, most of the existing literature focuses on establishing correlations supported by numerical simulations. Here, we present a first step in proving rigorous results for both, undirected and directed networks.

The first part on undirected networks is based on tools from algebraic graph theory, namely properties of the Fiedler vector. An interesting question which we could not answer here is whether we can find the link which, among all possible links, increases the spectral gap the most. In the light of our analysis this would correspond to nodes $k,l$ maximizing $v_k-v_l$. It was a long standing hypothesis that this maximum is reached for the two nodes which are connected by the longest path among all shortest paths in the graph. While this hypothesis was recently proven to be wrong with a tree graph as a counter-example \cite{Evans2011} , we believe that the construction of the subgraphs $\mathcal{G}_{1j}$ in Theorem \ref{Thm_Dissec_of_graphs} $iii)$ might shed light into a weaker formulation of this hypothesis.

For directed networks we have investigated the behaviour of a network when its cutset is perturbed.
There is only scenario we did not investigate here: When the spectral gap is an eigenvalue of $L_1$, determining the effect of a perturbation in opposite direction of the cutset cannot be solved in the framework presented above. It is of course possible to write down a similar term as in Equation (\ref{OppositeOperator}). However, in this case it involves left and right eigenvectors  of $L_1$. One would thus need to investigate eigenvectors of Laplacians of strongly connected digraphs, and more precisely the signs of their entries. To our knowledge there have been no attempts to do so, yet.

Even more involved is the question whether there exists a classification of links according to their dynamical impact in strongly connected networks. To our knowledge, no results have been obtained for the general case so far either. This is also due to the fact, that there are few attempts to extend the approaches on undirected graphs due to M. Fiedler (see\cite{Fiedler1973}) to directed graphs. As shown here, related results would essentially improve our understanding of the dynamical impact of a link in directed networks.

\bibliographystyle{plain}
\bibliography{Ybibliography}

\end{document}